\newcommand{\Z}{\mathbb{Z}}
\newcommand{\C}{\mathbb{C}}
\newcommand{\R}{\mathbb{R}}
\newcommand{\Hi}{\mathcal{H}}
\newcommand{\Li}{\mathcal{L}}
\newcommand{\X}{\mathsf{X}}
\newcommand{\Y}{\mathsf{Y}}
\newcommand{\W}{\mathsf{W}}
\newcommand{\V}{\mathsf{V}}
\newcommand{\Zm}{\mathsf{Z}}
\newcommand{\Um}{\mathsf{U}}
\newcommand{\cj}{\overline}
\newcommand{\op}{\mathrm}
\newcommand{\lpp}[3]{( #1 \mid #2 )_#3}
\DeclarePairedDelimiterX{\normb}[1]{\lVert}{\rVert}{#1}
\numberwithin{equation}{section}
\newtheorem{theorem}{Theorem}[section]
\newtheorem{lem}[theorem]{Lemma}
\newtheorem{tpr}[theorem]{Proposition}
\newtheorem{cor}[theorem]{Corollary}
\renewcommand{\qedsymbol}{$\blacksquare$}
\let\origproofname\proofname
\renewcommand{\proofname}{\upshape\textbf{\origproofname}}
\theoremstyle{definition}
\newtheorem{ex}[theorem]{Example}
\newtheorem{defi}[theorem]{Definition}
\newtheorem{rmk}[theorem]{Remark}
\newtheorem{nota}[theorem]{Notation}
\newtheorem{quest}[theorem]{Question}
\begin{document}

\title{$L^p$-modules and $L^p$-correspondences}

\author{Alonso Delfín}

\date{\today}

\address{Department of Mathematics, University of Colorado, Boulder CO 80309-0395, USA}

\email[]{alonso.delfin@colorado.edu}

\subjclass[2020]{Primary 46H15, 46H35; Secondary 46L08, 47L10.}
\thanks{\textsc{Department of Mathematics, University of Colorado, Boulder CO 80309-0395, USA}
}

\maketitle

\begin{abstract}
We introduce an $L^p$-operator algebraic analogue of Hilbert C*-modules. We present the theory of concrete $L^p$-modules, their morphisms, and basic constructions including countable direct sums and tensor products.
We then define $L^p$-correspondences and the interior tensor product of these. 
\end{abstract}

\tableofcontents

\section{Introduction}

Hilbert C*-modules have been widely used as a tool to study C*-algebras. 
For instance, these modules are one of the main ingredients used to study 
Morita equivalence and KK-theory of C*-algebras. Furthermore, 
Hilbert modules are used to define C*-correspondences, another C*-theoretic 
tool that appears constantly in the current literature. One of the main uses 
of such correspondences is that they give rise to the so called 
Cuntz-Pimsner algebras introduced by M. Pimsner in \cite{pim1997} 
and later refined by T. Katsura in a series of papers (see for instance \cite{kat2004} and \cite{kat2007}). 
The class of Cuntz-Pimsner algebras contains several known 
examples of C*-algebras such as the classical Cuntz algebras, 
Cuntz-Krieger algebras, crossed products by $\Z$, and topological 
graph C*-algebras. 

In recent years, N. C. Phillips revived the interest in 
algebras of operators acting on $L^p$-spaces 
(originally studied by C. Herz in \cite{CSH71}) 
by defining $p$-analogues of the Cuntz algebras 
and crossed products (see \cite{ncp2013CP} and \cite{ncp2012AC}). 
Since then, the study of these algebras has gained 
significant interest and several authors have contributed 
to the expansion of this relatively new theory (see for example \cite{blph2020, BDW2024, Chung2024, Daw10, GarThi16, GarThi20, ncpmgv2017AF}). 
Many objects in the widely studied Cuntz-Pimsner-Katsura class are now known to have a $p$-analogue, 
which raises a natural question on whether such class has a $p$-counterpart. This is, of course, a very general and difficult question 
given that some ``C*-closed'' constructions, such as taking quotients \cite{GarThi16,blph2020} or multiplier algebras \cite{BDW2024}, are not generally well behaved in the $L^p$-operator algebra setting. However, a starting point is to attempt an analogue theory 
to the one of Hilbert C*-modules and C*-correspondences in the $L^p$-setting. 
This is exactly what this paper achieves, at least at the concrete level. 

In this work we focus mainly on developing the theory of $L^p$-modules, 
their morphisms, and basic constructions such as countable direct sums and tensor products.
We also give, when possible, 
instances in which well known facts for Hilbert C*-modules extend to the $L^p$-case. All this allows us to define $L^p$-correspondences in a natural way. A follow-up project, currently in preparation, is to use some of the  results in this paper to define an $L^p$-analogue of the Cuntz-Pimsner class that includes the $p$-version 
of the Cuntz algebras and crossed products of $L^p$-operator algebras by $\Z$. 
Even though in this paper we do not attempt more applications
of $L^p$-modules, we do observe that  $L^p$-modules are in fact
 Banach pairs as defined by V. Lafforgue and that our notion of morphisms agrees with that of linear operators of Banach pairs (see \cite{Laf02,paravicini_2009} for instance). 
Thus, we believe this work can be applied to the general study of 
KK-theory and Morita equivalence of $L^p$-operator algebras (see \cite{Chung2024} for 
recent work along these lines). 

\textbf{Structure of the paper and main results:} 
Section \ref{s3} contains all the notational conventions and the necessary background and 
    references for $L^p$-operator algebras. In Sections \ref{s4} and \ref{s5}, we take advantage of the main results in \cite{Delfin_2022},
     where C*-correspondences are concretely represented on pairs of Hilbert spaces, to naturally define 
      $L^p$-modules and $L^p$-correspondences as a generalization of the C*-case. The main idea is that we are replacing 
      Hilbert spaces with $L^p$-spaces. Indeed, roughly speaking, our Definition \ref{mainD} for an
      $L^p$-module $(\Y, \X)$ comes by looking at the conditions satisfied by the pair $(\pi_\X(\X)^*, \pi_\X(\X))$
      in Definition 3.7 from \cite{Delfin_2022}. A consequence of this definition is that any 
      $L^p$-module $(\Y, \X)$ over an $L^p$-operator algebra $A$
      comes equipped with a pairing $\Y \times \X \to A$. Those $L^p$-modules for which their norm can be 
      recovered using such pairing are called \textit{C*-like $L^p$-modules}, so that any Hilbert module over a C*-algebra 
      $A$ is actually a C*-like $L^2$-module. We then further develop the general theory of $L^p$-modules by 
      presenting several examples and classical constructions such as their finite direct sums, countable direct sums, external tensor products, 
      and finally the notion of $L^p$-module morphisms (Definition \ref{GenMorph}) and the 
      $L^p$-module compact morphisms (Definition \ref{GenKMorph}). One advantage of these concrete definitions is that the algebra of morphisms from an $L^p$-module to itself,  denoted by 
      $\Li_A( \Y,\X )$ (see Equation \eqref{L_A(X,Y)}), comes naturally equipped with an $L^p$-operator algebra structure and is in fact a generalization of the C*-algebra of adjointable maps on a Hilbert module. 
       Similarly, in Equation (\ref{KK_A(X,Y)}), we get the ideal $\mathcal{K}_A(\Y,\X)$, which is a generalization
       of compact-module maps in the Hilbert module setting.
       
        Our main results 
       can be summarized as follows:
       \begin{enumerate}
       \item Theorem \ref{CDsumisLpmod} in which we show that our notion of countable direct sum 
       of $L^p$-modules agrees with the classical Hilbert module one. 
       \item Proposition \ref{stdL^pMod} 
       in which we show that, just as in the C*-case, tensoring the $L^p$-module $(\ell^q, \ell^p)$ over $\C$ against any $L^p$-module over $A$ corresponds simply to the countable direct sum of the module, 
       \[
       (\ell^q,\ell^p) \otimes_p (\Y, \X) = \bigoplus_{j = 1}^\infty( \Y, \X).
       \]
       \item Proposition \ref{Kasp} in which we show that the standard $L^p$-module 
       of a nondegenerate approximately unital $L^p$-operator algebra $A$ satisfies the $p$-version of
        Kasparov's theorem (Theorem 15.2.12 in \cite{wegge-olsen_2004}):
        \[
      \mathcal{K}_A(   (\ell^q, \ell^p) \otimes_p (A, A)   )  \cong \mathcal{K}(\ell^p) \otimes_p A, \]
      \[ \Li_A(  (\ell^q, \ell^p)  \otimes_p (A, A)  ) \cong M(  \mathcal{K}(\ell^p) \otimes_p A ).
        \]
\end{enumerate}        
       
       The definition of ``adjointable'' maps from an $L^p$-module to itself naturally
       gives rise to the concept of $L^p$-correspondence (Definition \ref{mainLpcorrresD}). 
       Since representations of C*-correspondences on pairs of Hilbert spaces are, in some sense, 
       well behaved with respect to the interior tensor product (Theorem 4.14 in \cite{Delfin_2022}),
       we deduce from there an analogous interior tensor product construction for the $L^p$-case
       (see Definition \ref{TensorLpCorres}). Having all these tools at our disposition while working 
       with $L^p$-correspondence provides evidence that we should be able to carry an analogue of the usual 
        Fock representations and the Fock space construction  (see Definitions 4.1 and 4.2 in \cite{kat2004})
for $L^p$-correspondences. This is currently being carried as a separate project that will use some 
of the results given in this paper. 
       
\section*{Acknowledgments} Part of this work comes from the author's doctoral dissertation \cite{Del2023}. 
       The author would like to thank his advisor, N. Christopher Phillips, for the advice given 
       during graduate school, particularly for proposing the problem of finding 
        analogues of C*-correspondences in the $L^p$-setting and for 
        carefully reading earlier versions 
       of this document. 

\section{Preliminaries}\label{s3}

If $E, F$ are Banach spaces, we write $\Li(E,F)$ for the Banach space of bounded linear maps  
from $E$ to $F$, equipped with the usual operator norm. As usual we write $\Li(E)$ for $\Li(E,E)$. 

Recall that a Banach algebra $A$ is said to have 
a \emph{contractive approximate identity} (c.a.i.\ from now on) if 
there is a net $(e_\lambda)_{\lambda \in \Lambda}$ in $A$
such that $\| e_\lambda \| \leq 1$ for all $\lambda \in \Lambda$
and for all $a \in A$, 
\[
\lim_{\lambda \in \Lambda} \| ae_\lambda -a \|  =\lim_{\lambda \in \Lambda} \| e_\lambda a -a \| = 0.
\]

\begin{defi}
Let $A$ be a Banach algebra and $E$ a Banach space. A
\emph{representation of $A$ on $E$} is a continuous homomorphism $\pi \colon A \to \Li(E)$.
\begin{enumerate}
\item We say that $\pi$ is \emph{contractive} if $\|\pi(a)\| \leq  \|a\|$ for all $a \in A$.
\item  We say that $\pi$ is \emph{isometric}  if $\|\pi(a)\| =  \|a\|$ for all $a \in A$.
\item  We say that $\pi$ is nondegenerate if
\[
\pi(A)E = \op{span}(\{\pi(a)\xi \colon a \in A \text{ and } \xi \in E\}),
\]
is dense in $E$, and we say that $A$ is \emph{nondegenerately representable} if it has a
nondegenerate isometric representation.
\end{enumerate}
\end{defi}

\subsection{$L^p$-operator algebras.}

If $(\Omega, \mathfrak{M}, \mu)$ is a measure space, 
we define $L^0(\Omega, \mathfrak{M}, \mu)$ to be
the space of complex valued measurable functions modulo functions 
that vanish a.e $[\mu]$. For $p \in [1,\infty]$ we have the classical 
$p$-norms
\[
\| \xi \|_p = \begin{cases}
\left( \int_\Omega |\xi|^p d\mu\right)^{1/p} & \text{ if } p \in [1,\infty)\\
\op{ess{ \ }sup}(|\xi|)  & \text{ if } p = \infty
\end{cases}.
\]
For any $p \in [1,\infty]$ we let $L^p(\Omega, \mathfrak{M}, \mu) = \{ \xi \in L^0(\Omega, \mathfrak{M}, \mu) \colon \| \xi \|_p < \infty\}$.
For $p \in [1,\infty]\cup\{0\}$, most times we
write $L^p(\Omega, \mu)$ or simply $L^p(\mu)$ for $L^p(\Omega, \mathfrak{M}, \mu)$. 
Also, if $\nu_I$ is counting measure on a set $I$, we write $\ell^p(I)$ instead of
$L^p(I, 2^I, \nu_I)$. In particular, when $d \in\Z_{\geq 1}$, 
we simply write $\ell^p_d$ for $\ell^p(\{1, \ldots, d\})$ and 
we also often write $\ell^p$ instead of $\ell^p(\Z_{\geq 1})$. 

Further, if $E$ is any Banach space, we denote by $L^0(\Omega, \mu; E)$ 
the vector space of measurable functions $\Omega \to E$ modulo functions 
that vanish a.e $[\mu]$. For any $p \in [1,\infty]$, the space of $p$-Bochner integrable functions
is defined as
\[
L^p(\Omega,\mu; E) = \{ g \in L^0(\Omega, \mu; E) \colon \omega \mapsto \|g(\omega)\|_E \in L^p(\Omega, \mu)\}.
\]

\begin{defi}\label{Lp}
Let $p \in [1, \infty)$. A Banach algebra $A$ is an 
\textit{$L^p$-operator algebra} if there is a measure space
$(\Omega, \mathfrak{M}, \mu)$ and an isometric representation of $A$
on $L^p(\mu)$.  
\end{defi}

\subsection{ Spatial Tensor Product}

For $p \in [1, \infty)$, there is a Banach space tensor product, called the 
\textit{spatial tensor product} and denoted by $\otimes_p$. 
This tensor product is defined when one of the factors is an $L^p$-space 
and the other an arbitrary Banach space. 
We describe below only the properties of $\otimes_p$ we will need, 
and refer the reader to Section 7 of \cite{defflor1993} for complete details on this 
tensor product.  

If $(\Omega_0, \mathfrak{M}_0, \mu_0)$ is a measure space and $E$ is a Banach space, 
then there is an isometric isomorphism 
\[
L^p(\mu_0) \otimes_p E \cong L^p(\Omega_0,\mu_0; E),
\]
such that for any $\xi \in L^p(\mu_0)$ and $\eta \in E$, the elementary tensor $\xi \otimes \eta $ 
is sent to the function $\omega \mapsto \xi(\omega)\eta$. Furthermore, if $(\Omega_1,  \mathfrak{M}_1, \mu_1)$
is another measure space and $E=L^p(\mu_1)$, then there is an isometric isomorphism
\[
L^p(\mu_0) \otimes_p L^p(\mu_1) \cong L^{p}(\Omega_0 \times \Omega_1, \mu_0 \times \mu_1),
\] 
sending $\xi \otimes \eta$ to the function 
$(\omega_0,\omega_1) \mapsto \xi(\omega_0)\eta(\omega_1)$ for every $\xi \in L^p(\mu_0)$
and $\eta \in L^p(\mu_1)$. 
We describe its main properties below. 
The following is Theorem 2.16 in \cite{ncp2012AC}, 
except that we have removed the  the $\sigma$-finiteness assumption 
as in the proof in Theorem 1.1 in \cite{figiel1984}.
\begin{enumerate}
\item Under the identification above, 
$\op{span}\{ \xi \otimes \eta  \colon \xi \in L^p(\mu_0), \eta \in L^p(\mu_1)\}$ 
is a dense subset of $L^{p}(\Omega_0 \times \Omega_1, \mu_0 \times \mu_1)$. 
\item $\| \xi \otimes \eta \|_p = \| \xi\|_p\|\eta\|_p$  for every 
$\xi \in L^p(\mu_0)$ and $\eta \in L^p(\mu_1)$.
\item Suppose that for $j \in \{0,1\}$ we have measure spaces 
$(\Omega_j, \mathfrak{M}_j, \mu_j)$, $(\Lambda_j, \mathfrak{N}_j, \nu_j)$,
$a \in \Li(L^p(\mu_0), L^p(\nu_0))$
and $b \in \Li(L^p(\mu_1), L^p(\nu_1))$. 
Then there is a unique map 
$a\otimes b \in \Li(L^p(\mu_0 \times \mu_1),
 L^p( \nu_0 \times \nu_1))$ 
such that 
\[
(a \otimes b)(\xi \otimes \eta)=a\xi \otimes b\eta
\]
for every $\xi \in L^p(\mu_0)$ and $\eta \in L^p(\mu_1)$. 
Further, $\| a\otimes b\|=\|a\|\|b\|$.
\item The tensor product of operators defined in (3) is associative, bilinear, 
and satisfies (when the domains are appropriate) 
$(a_1 \otimes b_1)(a_2 \otimes b_2) = a_1 a_2 \otimes b_1b_2$.
\end{enumerate}

\begin{defi}\label{LpT_P}
Let $p \in [1, \infty)$ and let 
 $A \subseteq \Li(L^p(\mu))$ and $B \subseteq \Li(L^p(\nu))$ be $L^p$-operator algebras. 
 We define $
  A \otimes_p B$
   to be the closed linear span, in $\mathcal{L}\big(L^p(\mu \times \nu) \big)$, of all $a \otimes b$ for $a \in A$ and $b \in B$. 
\end{defi}

\begin{rmk}
Definition \ref{LpT_P} provides only a concrete tensor product of $L^p$-operator algebras. 
Different representations for $A$ and $B$ on $L^p$-spaces can yield a different tensor product 
as shown below Example 1.15 in \cite{ncp2013CP}. This issue appears even when $p=2$, in the nonselfadjoint case, but will not happen for C*-algebras. This is fixed in 
\cite[Definition 7.2]{ChoiGardellaThiel24} where the general theory of $L^p$-operator algebras is introduced. 
To be more precise,  let $\op{Rep}_p(A)$ denote all the contractive nondegenerate representations of $A$ on $L^p$-spaces. Then, for any two $L^p$-operator algebras $A$ and $B$, the \textit{spatial tensor product} $A \otimes_{\op{sp}}B$ is defined as the completion of $A \otimes B$
under the norm 
\[
A \otimes B \ni t \mapsto \| t \|_{\op{sp}} = \sup\{ \| (\pi_A \otimes \pi_B)(t) \colon \pi_A \in \op{Rep}_p(A), \pi_B \in \op{Rep}_p(B)\}.
\] 
By construction the identity map extends to a contraction $A \otimes_{\op{sp}}B \to A \otimes_p B$ with dense range. Given the concrete nature of this paper, in which most of our definitions below are a priori dependent on the concrete representation $A \subseteq \Li(L^p(\mu))$, we only work with the tensor product in Definition \ref{LpT_P}.
\end{rmk}

\section{$L^p$-modules over $L^p$-operator algebras}\label{s4}

In this section we initiate the study of a type of modules over $L^p$-operator 
algebras that generalizes Hilbert modules over C*-algebras. The definitions 
here are motivated by the theory of concrete C*-modules (see Section 3 \cite{Murph97}). 

\subsection{$L^p$-modules and C*-like $L^p$-modules}

For our main definition, 
it is worth revisiting Example 2.1 from \cite{Delfin_2022}. Recall
that if $A \subseteq \Li(\Hi_0)$ is a concrete C*-algebra, then 
any closed subspace $\X \subseteq \Li(\Hi_0, \Hi_1)$ satisfying
\begin{enumerate}
\item $xa \in \X$ for all $x \in \X$, $a \in A$, 
\item $x_1^*x_2 \in A$ for all $x_1,x_2 \in \X$,
\end{enumerate}
 is a (concrete) right Hilbert $A$-module. Furthermore, 
observe that the adjoint space $\X^* = \{ x^* \colon x \in \X\}$ is 
 a closed subspace of $\Li(\Hi_1, \Hi_0)$ 
 satisfying 
\begin{enumerate}
\setcounter{enumi}{2}
\item $ay \in \X^*$ for all  $a \in A$, $y \in \X^*$.
\end{enumerate}
Finally, by standard Hilbert module arguments we also know
that the norm of an element $x_0$ in any 
right Hilbert $A$-module $\Y$ agrees with the operator norm of the map $x \mapsto \langle x_0, x\rangle_A$
which is in $\Li_A(\X, A)$ with adjoint 
given by $a \mapsto x_0a$.
For concrete Hilbert modules, this is equivalent to asking that for any $x_0 \in \X$ and $y_0 \in \X^*$
 \begin{enumerate}
\setcounter{enumi}{3}
\item $\| x_0 \| = \sup_{y \in \X^*, \|y\|=1} \| yx_0\|$ and $\| y_0\| = \sup_{x \in \X, \|x\|=1} \| y_0x\|$
\end{enumerate}
Our main definition of $L^p$-modules is motivated by the behavior 
we just described for the pair $(\X^*, \X)$.

\begin{defi}\label{mainD}
Let $(\Omega_0, \mathfrak{M}_0, \mu_0)$ and $(\Omega_1, \mathfrak{M}_1, \mu_1)$
be measure spaces, let $p \in [1, \infty)$, 
and let $A \subseteq \Li(L^p(\mu_0))$ be an $L^p$-operator 
algebra. An \emph{$L^p$-module} over $A$ is a pair $(\Y, \X)$, 
where $\Y \subseteq  \Li(L^p(\mu_1), L^p(\mu_0))$ and 
$\X \subseteq \Li(L^p(\mu_0), L^p(\mu_1))$ are closed subspaces  
satisfying
\begin{enumerate}
\item $xa \in \X$ for all $x \in \X$, $a \in A$, \label{lpm5}
\item $ay \in \Y$ for all $y \in \Y$, $a \in A$, \label{lpm5.1}
\item $yx \in A$ for all $y \in \Y$, $x \in \X$. \label{lpm5.2}
\end{enumerate}
If in addition for every $x_0 \in \X$ and $y_0 \in \Y$ we have 
 \begin{enumerate}
\setcounter{enumi}{3}
\item  $\| x_0 \| = \sup_{y \in \Y, \|y\|=1} \|yx_0\|$ and $\| y_0 \| = \sup_{x \in \X, \|x\|=1} \|y_0x\|$, \label{lpm6}
 \end{enumerate}
then we say that $(\Y, \X)$ is a \emph{C*-like $L^p$-module}. 
 \end{defi}
 
 \begin{rmk}\label{BanPA1}
 Observe that Conditions \eqref{lpm5} and \eqref{lpm5.1} in Definition \ref{mainD}
 give that $\X$ is a right Banach $A$-module and that $\Y$ is a left Banach $A$-module. 
 Together with Condition \eqref{lpm5.2} we see that the pair $(\Y, \X)$ is a Banach $A$-pair 
 in the sense of Lafforgue (see \cite[Définition 1.1.3]{Laf02}, \cite[Section 1]{paravicini_2009}). 
 \end{rmk}
 
 \begin{nota}\label{lppair}
If $(\Y, \X)$ is an $L^p$-module over $A$, it comes 
naturally equipped with a pairing $\Y \times \X \to A$ via 
$(y, x) \mapsto yx$. It will be convenient
to sometimes denote the operator $yx: L^p(\mu_0) \to L^p(\mu_0)$ by $( y \mid x )_A$.
 \end{nota}
 
We now present various examples of $L^p$-modules. 

\begin{ex}
Let $A$ be a C*-algebra and let $\X$ be any right Hilbert $A$ module. 
If $(\pi_A, \pi_\X)$ is an isometric representation of $\X$ on 
a pair of Hilbert spaces $(\Hi_0, \Hi_1)$ as in Definition 3.7 in \cite{Delfin_2022}, then $(\pi_\X(\X)^*, \pi_\X(X))$ is a C*-like $L^2$-module 
over the C*- algebra $\pi_A(A)$.  
\end{ex}

\begin{ex}\label{A_A}
Let $p \in [1, \infty)$, let $(\Omega, \mathfrak{M}, \mu)$
be a measure space, and let $A \subseteq \Li(L^p(\mu))$
be an $L^p$-operator algebra. Then $(A, A)$ is trivially 
an $L^p$-module over $A$.
However, $(A,A)$ is not always C*-like, as Condition 
(\ref{lpm6}) from Definition \ref{mainD} does not generally hold when $A$ is non-unital. 
Indeed, if 
\[
A = \Biggl\{ \Biggl( \begin{matrix}
0 & z \\
0 & 0
\end{matrix} \Biggr) \colon z \in \C \Biggr\} \subset M_2^p(\C)=\Li(\ell^p_2),
\]
then 
\[
\Biggl\|\Biggl( \begin{matrix}
0 & 1 \\
0 & 0
\end{matrix} \Biggr)\Biggr\| = 1 > 0 = \sup_{|z|=1} \Biggl\|\Biggl( \begin{matrix}
0 & z \\
0 & 0
\end{matrix} \Biggr)\Biggl( \begin{matrix}
0 & 1\\
0 & 0
\end{matrix} \Biggr)\Biggr\|.
\]
Nevertheless, if $A$ has a c.a.i.,
then it is immediate to see that $(A,A)$ is C*-like.
\end{ex}

\begin{ex}\label{LpLqMOD}
Let $(\Omega, \mathfrak{M}, \mu)$ be a measure space, let $p \in (1, \infty)$, 
and consider the $L^p$-operator algebra $A=\Li(\ell_1^p)$. Observe that 
$A$ can be identified with $\C$ via $a \mapsto a(1)$ and that $\|a\|=|a(1)|$ for any $a \in A$, 
whence the identification is isometric. Now let $\X = L^p(\mu)$, which we 
isometrically identify with $\Li(\ell_1^p, L^p(\mu))$ via $\xi \mapsto (z \mapsto z\xi)$ for any 
$\xi \in L^p(\mu)$ and $z \in \ell_1^p$.
Similarly, if $q$ is the Hölder conjugate of $p$, then $\Y=L^q(\mu)$
is isometrically identified with $\Li(L^p(\mu), \ell_1^p)$ via the usual dual 
pairing $\eta \mapsto (\xi \mapsto \langle \eta, \xi \rangle=\int_\Omega \eta\xi d\mu)$ 
for $\eta \in L^q(\mu)$ and $\xi \in L^p(\mu)$.  
Under these identifications, we claim that $(\Y, \X)$ is a C*-like $L^p$-module 
over $A$. Clearly $\X$ and $\Y$ are closed subsets of $\Li(\ell_1^p, L^p(\mu))$ and $\Li(L^p(\mu), \ell_1^p)$ respectively. 
We check that Conditions \eqref{lpm5}-\eqref{lpm6} from Definition \ref{mainD} hold. 
Let $\xi \in \X$ and let $a \in A$. Then the composition $\xi a: \ell_1^p \to L^p(\mu)$
is clearly a bounded linear map, proving Condition \eqref{lpm5}.
Similarly, for $a\in A$ and $\eta \in Y$, 
we note that the composition $a\eta \colon L^p(\mu) \to \ell_1^p$ is a bounded linear map and 
therefore Condition \eqref{lpm5.1} is done. 
If $\eta \in \Y$ and $\xi \in \X$, 
the composition $\lpp{\eta}{\xi}{A} \colon \ell_1^p \to \ell^p_1$ agrees with $\langle \eta, \xi \rangle$ as an 
element of $A$, 
so Condition \eqref{lpm5.2} follows.
Finally, Hölder duality gives that for any $\xi_0 \in \X$ and $\eta_0 \in Y$
$\| \xi_0 \|_p = \sup_{\| \eta \|_q = 1 } |\langle \eta, \xi_0 \rangle|$ and 
$\| \eta_0 \|_q = \sup_{\| \xi \|_p = 1 } |\langle \eta_0, \xi \rangle|$, 
so Condition \eqref{lpm6} also follows. 
\end{ex}

\begin{ex}\label{lplqMOD}
Let $d \in \Z_{\geq 1}$, let $p \in [1, \infty)$,
and let $q$ be the Hölder conjugate of $p$. 
As particular instance of Example \ref{LpLqMOD}, we see that 
 $(\ell_d^q, \ell_d^p)$ is a C*-like $L^p$-module 
 over $\C$. Notice that
we are now able to include $p=1$ because 
the dual of $\ell_d^1$ is $\ell_d^\infty$.
\end{ex}

\begin{ex}\label{LqLpMod}
Let $(\Omega, \mathfrak{M}, \mu)$ be a measure space, let $p \in (1, \infty)$, 
and consider the $L^p$-operator algebra $A=\mathcal{K}(L^p(\mu))$ of compact operators on $L^p(\mu)$.  As before, we let $q$ be the Hölder conjugate of $p$. We can switch the order on the modules in Example \ref{LpLqMOD} and still get 
an $L^p$-module but over $\mathcal{K}(L^p(\mu))$ instead of $\C$. Indeed, 
let $\X = L^q(\mu)$, identified as before with $\Li(L^p(\mu), \ell_1^p)$,
and let $\Y = L^p(\mu)$ which is identified again with $\Li(\ell_1^p, L^p(\mu))$. 
For any $a \in\Li(L^p(\mu))$ let $a' \in \Li(L^q(\mu))$ be the Banach dual map of $a$, which satisfies $\langle a'\eta, \xi \rangle = \langle \eta, a \xi \rangle$ for any $\xi \in \Y$ and $\eta \in \X$. It is straightforward to check that $\eta a = a'(\eta) \in \X$ for any $\eta \in \X$, whence Condition \eqref{lpm5} in Definition \ref{mainD} follows.  Condition \eqref{lpm5.1} follows at once from the fact that $A$ naturally acts on $L^p(\mu)$ on the left as bounded operators. Condition \eqref{lpm5.2} also holds, for a direct calculation shows that $\xi \eta= \theta_{\xi, \eta} \in \mathcal{K}(L^p(\mu)) =A$. Finally, since $\| \theta_{\xi, \eta}\| = \|\xi\|_p \|\eta\|_q$, it is also clear that $(L^p(\mu), L^q(\mu))$ is a C*-like 
module over $\mathcal{K}(L^p(\mu))$. 
\end{ex}

\begin{ex}\label{lqlpMOD}
Let $d \in \Z_{\geq 1}$, let $p \in [1, \infty)$
and let $q$ be the Hölder conjugate of $p$. 
As particular instance of Example \ref{LqLpMod} we get that 
 $(\ell_d^p, \ell_d^q)$ is a C*-like $L^p$-module 
 over $\mathcal{K}(\ell_d^p)=M_d^p(\C)$. 
 We are again able to include $p=1$ because 
the dual of $\ell_d^1$ is $\ell_d^\infty$.
\end{ex}

\begin{ex}\label{AdAdMOD}
In this example we combine, via the spatial tensor product, Example \ref{lplqMOD} 
with Example \ref{A_A}. This is a particular case of the external tensor product 
construction discussed in Section \ref{ETPC} below. 
Let $d \in \Z_{\geq 2}$, let $p \in (1, \infty)$, and let $(\Omega, \mathfrak{M},\mu)$
be a measure space. 
If $\nu_d$ is counting measure on $\{1, \ldots, d\}$, then we have 
the following isometric isomorphisms
\[
\ell_d^p \otimes_p L^p(\mu) \cong L^p(\nu_d \times \mu) \cong L^p(\mu)^d.
\]
The last isomorphism one comes from the map $\xi \mapsto (\xi_1, \ldots, \xi_d)$ where, for each $j \in \{1, \ldots, d\}$, 
 $\xi_j \in L^p(\mu)$ is given by $\xi_j(\omega)=\xi(j, \omega)$,
 and the norm on $ L^p(\mu)^d$ is given by 
 \[
 \| (\xi_1, \ldots, \xi_d)\|=\Bigl( \sum_{j=1}^d\|\xi_j\|^p\Bigl)^{1/p}.
 \]
 Now let $A \subseteq \Li(L^p(\mu))$
be an $L^p$-operator algebra. 
We define $\X \subseteq \Li(L^p(\mu), L^p(\mu)^d)$ and 
$\Y \subseteq \Li( L^p(\mu)^d, L^p(\mu))$
by 
 \[
 \X = \ell_d^p \otimes_p A =  \Li(\ell_1^p, \ell_d^p) \otimes_p A \ 
  \mbox{ and } \  \Y = \ell_d^q \otimes_p A =\Li(\ell_d^p, \ell_1^p) \otimes_p A. 
 \]
 Observe that $\X$ is identified with $A^d$, with norm given by
\[
\| (a_1, \ldots, a_d) \| = \sup_{\| \xi \| = 1 } \Bigl(\sum_{j=1}^d \|a_j\xi\|^p \Bigl)^{1/p},
\]
where the supremum is taken over $\xi \in L^p(\mu)$. Similarly,  $\Y$ is 
also identified with 
$A^d$, but equipped with the norm
\[
\| (b_1, \ldots, b_d) \| = \sup_{\| (\xi_1, \ldots, \xi_d)\| = 1 } \Bigl\| \sum_{j=1}^d b_j\xi_j\Bigl\|, 
\]
where the supremum is taken over 
$(\xi_1, \ldots, \xi_d) \in  L^p(\mu)^d$. 
 Since $\X$ and $\Y$ are closed by construction, we automatically 
 have the closure requirements of Definition \ref{mainD}. 
 For Condition \eqref{lpm5}, take $z \in \ell_d^p$ and $a_1, a_2 \in A$.  
 We have 
 \[
 (z \otimes a_1)a_2=z \otimes a_1a_2 \in \X.
 \]
Therefore the composition $xa$ is in $\X$ for all 
$x \in \X$ and all $a \in A$. 
Condition \eqref{lpm5.1} follows similarly. Indeed,
 if $a_1, a_2 \in A$ and $w \in \ell^q_d$ we get 
\[
a_1 (w \otimes a_2) = w \otimes a_1a_2 \in \Y,
\]
whence $ay \in \Y$ for all $a \in A$ and $y \in \Y$. 
To verify Condition \eqref{lpm5.2}, 
notice that for $w \in \ell_d^q$, 
$z \in \ell_d^p$, and $a_1, a_2 \in A$, we have 
\[
 (w \otimes a_1)(z \otimes a_2)=\Bigl( \sum_{j=1}^d w(j)z(j)\Bigl)\ a_1a_2 \in A.
\]
Hence, it follows that $\lpp{y}{x}{A} \in A$ for all $y \in \Y$ and all $x \in \X$.
Thus, $(\Y, \X)$ is
an $L^p$-module over $A$.
The C* likeness of $(\Y, \X)$ for certain $A \subseteq M_k^p(\C)$ is studied in detail in \cite{REU2024}, where it is shown that $(\Y, \X)$ is C*-like when $A$ is any block diagonal subalgebra of $M_k^p(\C)$. However,  $(\Y, \X)$ is generally not C*-like, not even if $A$ is unital.
\end{ex}

\subsection{Finite Direct Sum of $L^p$-modules}

Let $p \in (1, \infty)$. Example \ref{AdAdMOD} can be realized as the direct sum of 
$d$ copies of the 
$L^p$-module from Example \ref{A_A}. 
We now describe such direct sum in full generality.  
Let $p\in [1, \infty)$, let $d \in \Z_{\geq 2}$, and for each $j \in \{1, \ldots, d\}$
let $(\X_j, \Y_j)$ be an $L^p$-module
over an $L^p$-operator algebra 
$A \subseteq \Li(L^p(\mu_0) )$. For $j \in \{1, \ldots, d\}$, we have 
measure spaces $(\Omega_j, \mathfrak{M}_j, \mu_j)$ such that 
$\X_j$ is a closed subspace of 
$\Li(L^p(\mu_0), L^p(\mu_j) )$ and 
$\Y_j$ is a closed subspace of 
$\Li(L^p(\mu_j), L^p(\mu_0) )$. 
Consider the algebraic direct sums $\X=\bigoplus_{j=1}^d \X_j$ 
and $\Y=\bigoplus_{j=1}^d\Y_d$.
The pair $( \X, \Y)$ has a natural structure of 
$L^p$-module over $A$. Indeed, 
\[
\X \subseteq \Li\Bigl( L^p(\mu_0), 
\bigoplus_{j=1}^d  L^p(\mu_j) \Bigl),
\]
 where each $(x_1, \ldots, x_d) \in \X$ acts on 
 $\xi \in L^p(\mu_0)$ by 
 \[
(x_1, \ldots, x_d)  \xi = (x_1\xi, \ldots, x_d\xi).
 \]
This endows $\X$ with the operator norm satisfying
\[
\max_{j=1, \ldots, d}\|x_j\| \leq \| (x_1, \ldots, x_d)  \| 
\leq \Bigl(\sum_{j=1}^d \|x_j\|^p \Bigl)^{1/p}.
\]
Even though in general neither equality is true, this shows that 
$\X$ is a closed subspace of  $\Li\big( L^p(\mu_0), 
\bigoplus_{j=1}^d  L^p(\mu_j)  \big).$ Similarly, 
\[
\Y \subseteq \Li\Bigl(\bigoplus_{j=1}^d  L^p(\Omega_j, \mu_j) ,
  L^p(\Omega_0,\mu_0)  \Bigl) 
\]
 where each $(y_1, \ldots, y_d) \in \Y$ acts on 
 $(\eta_1, \ldots,\eta_d) \in \bigoplus_{j=1}^d  L^p(\mu_j)$ by 
 \[
(y_1, \ldots, y_d)(\eta_1, \ldots,\eta_d)  = \sum_{j=1}^d y_j\eta_j.
 \]
Thus, the operator norm inherited by $\Y$ satisfies
\[
\max_{j=1, \ldots, d} \|y_j\| \leq \| (y_1, \ldots, y_d)  \| 
\leq \Bigl(\sum_{j=1}^d \|y_j\|^q \Bigl)^{1/q}.
\]
where $q$ is the Hölder conjugate for $p$.
 Once again, equality in both ends of the last inequality
does not always hold, 
but it follows that $\Y$ is a closed subspace of  $\Li\big(  
 \bigoplus_{j=1}^d L^p(\mu_j), L^p(\mu_0) \big)$. 
For each $(x_1, \ldots, x_d) \in \X$ and $a \in A$,
it is clear that Condition \eqref{lpm5} in Definition \ref{mainD} holds:
\[
(x_1, \ldots, x_d) a=(x_1a, \ldots, x_da) \in \X
\] 
We now check condition \eqref{lpm5.1} . 
Indeed, it is clear that if $(y_1, \ldots, y_d) \in \Y$, $a \in A$,
then $ay_j \in \Y_j$ for each $j \in\{1, \ldots, d\}$, and therefore
we have 
\[
a(y_1, \ldots, y_d)=(ay_1, \ldots, ay_d) \in  \Y.
\]
For Condition \eqref{lpm5.2}, if $(y_1, \ldots, y_d) \in \Y$,
we get 
\[
(y_1, \ldots, y_d)(x_1, \ldots, x_d) =\sum_{j=1}^d \lpp{y_j}{x_j}{A} \in A,
\]
whence $(\Y, \X)$ is an $L^p$-module over $A$. 

\subsection{Countable Direct Sums of $L^p$-modules}

We start by discussing a naive attempt at defining
countable direct sums of $L^p$-modules that generalizes 
the finite dimensional case. We then give an example to show
why this fails in general. We finish the section with the correct
definition and a result that shows that this definition
 generalizes direct sums of Hilbert modules. 

Let $p \in [1, \infty)$. Suppose now that we have a sequence of measure spaces 
$((\Omega_j, \mathfrak{M}_j, \mu_j))_{j=0}^\infty$
and a sequence of $L^p$-modules $((\Y_j, \X_j))_{j=1}^\infty$ 
over an $L^p$-operator algebra $A \subseteq \Li(L^p(\mu_0))$ such that, 
for each $j \in \Z_{\geq 1}$,
the module $\X_j$ is a closed subspace of $\Li(L^p(\mu_0), L^p(\mu_j))$. 
An immediate generalization from the finite case is to consider
\begin{align*}
\X_{\op{w}}& = \left\{ (x_{j})_{j=1}^\infty \colon x_j \in \X_j, \sup_{\|\xi\|_p=1} \sum_{j=1}^\infty \|x_j\xi\|^p < \infty\right\},\\
\Y_\op{w} & =  \left\{ (y_{j})_{j=1}^\infty \colon y_j \in \Y_j, \sup_{\sum_{j=1}^\infty\|\eta_j\|^p_p=1} \Big\|\sum_{j=1}^\infty y_j\eta_j\Big\|_p < \infty\right\},
\end{align*}
where the supremum for elements in $\X_{\op{w}}$ is taken over elements $\xi \in L^p(\mu_0)$
and the one for elements in $\Y_\op{w}$ is taken considering elements $\eta_j \in L^p(\mu_j)$
for each $j \in \Z_{\geq 1}$. If we equip $ \bigoplus_{j=1}^\infty L^p(\mu_j)$ with the usual $p$-norm, then $\X_{\op{w}}$ is a closed subspace of $\Li\big(L^p(\mu_0), \bigoplus_{j=1}^\infty L^p(\mu_j)\big)$ and 
$\Y_{\op{w}}$ is a closed subspace of  $\Li\big(\bigoplus_{j=1}^\infty L^p(\mu_j), L^p(\mu_0) \big)$ (this 
will follow from Theorem \ref{CDsumisLpmod}). 
Furthermore, the pair $( \Y_{\op{w}}, \X_{\op{w}} )$ satisfies Conditions \eqref{lpm5} and \eqref{lpm5.1} in 
Definition \ref{mainD}. However, Condition \eqref{lpm5.2} fails in general. Indeed, in the following example we will see that
it is not always true that 
requiring $(x_j)_{j=1}^\infty \in \X_{\op{w}}$ and $(y_{j})_{n=1}^\infty \in \Y_{\op{w}}$ 
implies that
\[
(y_{j})_{n=1}^\infty (x_j)_{j=1}^\infty = \sum_{j=1}^\infty \lpp{y_j}{x_j}{A} 
\]
converges to an element of $A$. 

\begin{ex}
Let $p \in [1, \infty)$ and consider $(\ell^p,\ell^q)$,
which is a C*-like $L^p$-module over $\mathcal{K}(\ell^p)$, as shown in Example \ref{LqLpMod}
(we are able to include $p=1$ because the dual of $\ell^1$ is $\ell^\infty$). 
For each $j \in \Z_{\geq 1}$ we let $(\Y_j, \X_j) = (\ell^p, \ell^q)$ 
and consider $\X_{\op{w}}$ and $\Y_{\op{w}}$ as above. For each $j \in \Z_{\geq 1}$ define 
$x_j \colon \ell^p \to \ell_1^p$ by $x_j\xi = \xi(j)$ 
and $y_j \colon \ell_1^p \to \ell^p$ by $y_j\zeta = \zeta \delta_j$, 
where $\{\delta_j \colon j \in \Z_{\geq 1}\}$ is the canonical basis of $\ell^p$ (notice that for $p=2$, 
$y_j$ is actually $x_j^*$).  
Then $x_j \in \X_j$ and $y_j \in \Y_j$ for each $j \geq 1$. Furthermore, 
\[
\sup_{\|\xi\|_p=1} \sum_{j=1}^\infty |x_j\xi|^p = \sup_{\|\xi\|_p=1} \|\xi\|_p^p = 1,
\]
and 
\[
\sup_{\sum_{j=1}^\infty|\zeta_j|^p_p=1} \Big\|\sum_{j=1}^\infty y_j\zeta_j\Big\|^p_p = \sup_{\sum_{j=1}^\infty|\zeta_j|^p=1} \sum_{j=1}^\infty|\zeta_j|^p =1.
\]
Therefore $(x_j)_{j=1}^\infty \in \X_{\op{w}}$ and $(y_j)_{j=1}^\infty \in \Y_{\op{w}}$. 
Moreover, for each $j \in \Z_{\geq 1}$ we clearly have 
$ y_jx_j\xi = \xi(j)\delta_j$ and therefore $y_jx_j  = \theta_{\delta_j, \delta_j} \in \mathcal{K}(\ell^p)$. However, 
\[
\Bigl \| \sum_{j=n}^m \theta_{\delta_j,\delta_j} \Bigl\| =1
\]
for any $m \geq n \geq 1$, and therefore $\sum_{j=1}^\infty y_jx_j = \sum_{j=1}^\infty \theta_{\delta_j,\delta_j} $
does not converge in $\mathcal{K}(\ell^p)$. 
\end{ex}

Thus, in general $(\Y_\op{w}, \X_{\op{w}})$ is 
not an $L^p$-module over $A$. We actually need to work with subspaces of 
$\X_{\op{w}}$ and $\Y_{\op{w}}$ to make things work. 
The motivation for the following definition for 
countable direct sums of $L^p$-modules will be clear 
once we introduce the external tensor product in Section \ref{ETPC} and 
prove Proposition \ref{stdL^pMod}.

\begin{defi}\label{countablesum}
Let $p \in [1, \infty)$,  for each $j \in \Z_{\geq 0}$ let  
$(\Omega_j, \mathfrak{M}_j, \mu_j)$ be a measure space, and 
let $((\Y_j, \X_j))_{j=1}^\infty$ be a sequence of $L^p$-modules over an $L^p$-operator algebra 
$A \subseteq \Li(L^p(\mu_0))$ such that for $j \in \Z_{\geq 1}$, 
the module $\X_j$ is a closed subspace of $\Li(L^p(\mu_0), L^p(\mu_j))$. 
We define the direct sum module $\bigoplus_{j=1}^\infty (\Y_j, \X_j)$ as the pair 
$(\Y, \X)$ where  
\begin{align*}
\X & 
= \left\{
 (x_{j})_{j=1}^\infty \in \X_{\op{w}}\colon \lim_{n,m \to \infty} \sup_{\| \xi \|_p=1 } \sum_{j=n}^m \|x_j\xi \|_p^p = 0 
 \right\},\\
\Y & =  \left\{
 (y_{j})_{j=1}^\infty \in \Y_\op{w} \colon \lim_{n,m \to \infty}\sup_{\sum_{j=1}^\infty \|\eta_j\|^p_p=1} \Big\|\sum_{j=n}^m y_j\eta_j\Big\|_p =0
 \right\}.
\end{align*}
\end{defi}

Next, we show that $\bigoplus_{j =1}^\infty (\Y_j, \X_j)$ is indeed 
an $L^p$-module over $A$ that agrees with the usual definition of direct sum 
of Hilbert modules when $A$ is a C*-algebra.

\begin{theorem}\label{CDsumisLpmod}
Let $(\Y, \X) = \bigoplus_{j = 1}^\infty (\Y_j,\X_j)$ be as in 
Definition \ref{countablesum}. Then:
\begin{enumerate}
\item  $(\Y, \X)$ is an $L^p$-module over $A$. \label{DSisLpMOD}
\item  Let $p=2$,  let $A$ be a C*-algebra, and for each $j \geq 1$ 
let $\X_j$ be a Hilbert $A$-module isometrically represented in $(\Hi_0, \Hi_j)$ via $\pi_{\X_j} \colon \X_j \to \Li(\Hi_0, \Hi_j)$, as in Definition 3.7 in \cite{Delfin_2022}, with $\X_j \Hi_0$ dense in $\Hi_j$. 
Then 
\[
\big(  (\pi_{\X_j}(x_j)^*)_{j=1}, (\pi_{\X_j}(x_j))_{j=1}  \big) \in \bigoplus_{j = 1}^\infty ( \pi_{\X_j}(\X_j)^*, \pi_{\X_j}(\X_j) ) 
\]
if and only if $(x_j)_{j=0}^\infty \in \bigoplus_{j = 1}^\infty  \X_j$ (that is if and only if $\sum_{j =1}^\infty \langle x_j, x_j\rangle_A$ converges in $A$). 
\end{enumerate}
\end{theorem}
\begin{proof}
To prove the first statement, we first check that $\X$ is a closed 
subspace of $\Li\big( L^p(\mu_0), \bigoplus_{j=1}^\infty L^p(\mu_j) \big)$
and that $\Y$ is a closed subspace of  $\Li\big(  \bigoplus_{j=1}^\infty L^p(\mu_j), L^p(\mu_0) \big)$. 
To do so, let $(x^{(n)})_{n=1}^\infty$ be a Cauchy sequence in $\X$. Then a direct 
check shows that for each $j \in \Z_{\geq 1}$, $\| x_j^{(n)}-x_j^{(m)}\| \leq \| x^{(n)}-x^{(m)}\|$ and 
therefore $(x_j^{(n)})_{n=1}^\infty$ is a Cauchy sequence in $\X_j$. 
Thus, by completeness, we get for each $j \in \Z_{\geq 1}$ an element $x_j \in \X_j$
such that $x^{(n)}_j \to x_j$  as $n \to \infty$. Define $x=(x_j)_{j=1}^\infty$. We claim that 
$(x^{(n)})_{n=1}^\infty$ converges to $x$. Let $\varepsilon>0$ 
and choose $N\in \Z_{\geq 1}$ such that 
$\| x^{(n)}- x^{(m)} \|^p < \varepsilon^p$ whenever $m\geq n \geq N$. 
Now take any $\xi \in L^p(\mu_0)$ with $\| \xi\|=1$, and observe that
 \[
 \sum_{j=1}^\infty \| (x_j^{(n)}-x_j^{(m)})\xi \|^p \leq \| x^{(n)}- x^{(m)} \|^p < \varepsilon^p.
 \]
 Letting $m \to \infty$ on both ends of the previous inequality gives 
 \[
  \sum_{j=1}^\infty \| (x_j^{(n)}-x_j)\xi \|^p < \varepsilon^p.
 \]
Taking supremum over all $\|\xi\|=1$ yields $\| x^{(n)}-x\|^p <\varepsilon^p$ whenever $n \geq N$. 
Thus, $x^{(n)}$ converges to $x$. 
Similarly, if we let $(y^{(n)})_{n=1}^\infty$ be a Cauchy sequence in $\Y$, 
for each $j$ we see that $(y_j^{(n)})_{n=1}^\infty$ is a Cauchy sequence in $\Y_j$ 
and therefore we get an element $y_j \in \Y_j$ such that $y^{(n)}_j \to y_j$. A similar 
argument shows that, if we define $y=(y_j)_{j=1}^\infty$, 
 then $y^{(n)}$ converges to $y$. 
It remains to check that $x \in \X$ and $y \in \Y$. 
 For any $\xi \in L^p(\mu_0)$ with $\| \xi\|=1$ and for any $m > n \geq 1$ we repeatedly apply Minkowski's inequality (both for $L^p(\mu_j)$ and for $\R^{m-n}$) to get 
 \begin{align*}
 \Big( \sum_{j=n}^m \| x_j\xi \|^p\Big)^{1/p} 
 & \leq  \Big( \sum_{j=n}^m (\| x_j\xi -x_j^{(k)}\xi\|^p\Big)^{1/p} +\Big( \sum_{j=n}^m \|x_j^{(k)}\xi\|^p\Big)^{1/p} \\
  & \leq  \Big( \sum_{j=n}^m (\| x_j\xi -x_j^{(k)}\xi\|^p\Big)^{1/p} +\Big( \sum_{j=n}^m \|x_j^{(k)}\xi\|^p\Big)^{1/p} \\
  & \leq \| x-x^{(k)} \| + \Big( \sum_{j=n}^m \|x_j^{(k)}\xi\|^p\Big)^{1/p}.
 \end{align*}
Since $x^{(k)} \in \X$, it now follows that $x \in \X$, proving closure of $\X$. 
Similarly, if $(\eta_j)_{j=1}^\infty$ is a norm one element of $\bigoplus_{j=1}^\infty L^p(\mu_j)$ and 
$m \geq n \geq 1$, a direct application of Minkowski's inequality in $L^p(\mu_0)$ gives 
\[
\Big\| \sum_{j=n}^m y_j\eta_j \Big\| \leq \Big\| \sum_{j=n}^m (y_j-y^{(k)}_j)\eta_j\Big\|+ \Big\| \sum_{j=n}^m y^{(k)}_j\eta_j\Big\|  
\leq \|y-y^{(k)}\| + \Big\| \sum_{j=n}^m y^{(k)}_j\eta_j\Big\|.
\]
Hence, since $y^{(k)} \in \Y$, it follows that $y \in \Y$, proving that $\Y$ is also closed.

It still remains for us to check that conditions \eqref{lpm5}-\eqref{lpm5.2} in 
Definition \ref{mainD} are satisfied. Condition \eqref{lpm5.2} is the only one that requires some work. 
Let $(x_j)_{j=1}^\infty \in \X$ and 
$(y_j)_{j=1}^\infty \in \Y$, we only need to check that $\sum_{j=1}^\infty \lpp{y_j}{x_j}{A}$ converges in $A$ to the operator $\lpp{(y_j)_{j=1}^\infty }{(x_j)_{j=1}^\infty }{A} \colon L^p(\mu_0) \to  L^p(\mu_0)$.
Indeed,  set 
\[
K=\sup_{\sum_{j=1}^\infty \| \eta_j \|^p = 1}\Big\| \sum_{j=1}^\infty y_j \eta_j \Big\|,
\]
 and for each $m\geq n \geq 1$ let $M_{n,m}(\xi) = \sum_{j=n}^m \| x_j\xi\|^p$. Then 
$K < \infty$ and $\lim_{m,n \to\infty} \sup_{\|\xi\|=1}M_{n,m}(\xi) =0$.  Now
for any $\xi \in L^p(\mu_0)$ with $\|\xi\|=1$, we have
\[
\Big\| \sum_{j=n}^m y_jx_j \xi \Big\| \leq K M_{m,n}(\xi).
\]
Hence, 
\[
\Big\| \sum_{j=n}^m y_jx_j\Big\| \leq K\sup_{\|\xi\|=1} M_{m,n}(\xi),
\]
from which it follows that $(\sum_{j=1}^n y_jx_j)_{n=1}^\infty$ is a Cauchy sequence
in $A$ and therefore converges to $\lpp{(y_j)_{j=1}^\infty }{(x_j)_{j=1}^\infty }{A}$, proving Condition \eqref{lpm5.2}. This proves Part \eqref{DSisLpMOD} in the statement.

For the second part of the statement, the `only if' implication follows immediately from the fact that $(\Y, \X)$ is an $L^2$-module over $A$ thanks to Part \eqref{DSisLpMOD}. For the `if' implication, identify $A$ with its isometric copy in 
$\Li(\Hi_0)$ and similarly for each $j \in \Z_{\geq 1}$ we identify $\X_j$
with its isometric copy in $\Li(\Hi_0, \Hi_j)$ so that $\X_j^* \subseteq
 \Li(\Hi_j, \Hi_0)$. We have to show that convergence of $\sum_{j=1}^\infty x_j^* x_j$
 in $A$ implies the following two conditions 
\begin{enumerate}
\item[(a)] $\displaystyle{ \sup_{\| \xi \|_2=1 } \sum_{j=1}^\infty \|x_j\xi \|_2^2 < \infty}$ and $\displaystyle{\lim_{n,m \to \infty}   \sup_{\| \xi \|_2=1 } \sum_{j=n}^m \|x_j\xi \|_2^2 = 0}$,
\item[(b)] $\displaystyle{  \sup_{\sum_{j=1}^\infty \|\eta_j\|^2_2=1} \Big\|\sum_{j=1}^\infty x^*_j\eta_j\Big\|_2 < \infty}$ and $\displaystyle{ \lim_{n,m \to \infty} \sup_{\sum_{j=1}^\infty \|\eta_j\|^2_2=1} \Big\|\sum_{j=n}^m x^*_j\eta_j\Big\|_2 =0.}$
\end{enumerate}
 To check condition (a), let $\xi \in \Hi_0$
 have norm $1$ and let 
$m\geq n \geq 1$. Then, 
 \[
  \sum_{j=n}^m \|x_j\xi \|_2^2  =   \sum_{j=n}^m \langle \xi, x_j^*x_j\xi \rangle 
  =  \Big\langle \xi, \sum_{j=n}^m x_j^*x_j\xi \Big\rangle \leq \Big\| \sum_{j=n}^m x_j^*x_j \Big\|,
 \]
 and also 
 \[
   \sum_{j=1}^\infty \|x_j\xi \|_2^2  \leq  \Big\| \sum_{j=1}^\infty x_j^*x_j \Big\|.
 \]
 Hence, convergence of $\sum_{j=1}^\infty x_j^* x_j$ in $A$ does imply condition (a). 
 For condition (b), let $(\eta_j)_{j=1}^\infty$ be a norm
 $1$ element of $\bigoplus_{j=1}^\infty \Hi_j$. 
 In addition, for fixed $m\geq n \geq 1$,
define $\bm{\eta}=(\eta_n, \ldots, \eta_m) \in \bigoplus_{j=n}^m \Hi_j$. 
Observe 
 that $\|\bm{\eta}\| \leq \| (\eta_j)_{j=1}^\infty \|_2 =1$. Then
 \begin{align*}
 \Big\| \sum_{j=n}^m x_j^*\eta_j \Big\|^2 
 & = 
 \Big\langle  \sum_{j=n}^m x_j^*\eta_j , \sum_{k=n}^m x_k^*\eta_k \Big\rangle \\
& = \sum_{j = n}^m \sum_{k=n}^m \langle \eta_j,  x_jx_k^*\eta_k \rangle \\
& = \langle \bm{\eta}, (x_jx_k^*)_{j,k=n}^m\bm{\eta} \rangle \\
& \leq \|  (x_jx_k^*)_{j,k=n}^m\|.
 \end{align*}
Both statements in condition (b) now follow at once from the convergence of 
$\sum_{j=1}^\infty x_j^*x_j $ and 
the fact that the norms   $\| (x_jx_k^*)_{j,k=n}^m  \|$ and $\| \sum_{j=n}^m x_j^*x_j \|$
agree (see either Lemma 2.1 in \cite{KajPinWat1998} or Lemma 3.1.6 in \cite{Del2023} for a proofs of this norm equality).
\end{proof}

\subsection{External Tensor Product of $L^p$-modules}\label{ETPC}

We now present an analogue of the external tensor product 
for Hilbert modules. This generalizes the construction 
from Example \ref{AdAdMOD}. Moreover, Proposition 
\ref{stdL^pMod} below was in fact the main motivation for the 
definition 
of countable direct sums presented above (see Definition \ref{countablesum}). 

\begin{defi}\label{ExtTP}
For $j \in \{0,1\}$, let $(\Omega_j, \mathfrak{M}_j, \mu_j)$ and
$(\Lambda_j, \mathfrak{N}_j, \nu_j)$ be measures spaces, 
let $p \in (1, \infty)$, let $(\Y,\X)$ be an $L^p$-module over an $L^p$-operator algebra 
$A \subseteq \Li(L^p(\mu_0))$ with $\X \subseteq \Li(L^p(\mu_0), L^p(\mu_1))$, 
and let $(\W,\V)$ be an $L^p$-module over an $L^p$-operator algebra 
$B \subseteq \Li(L^p(\nu_0))$ with $\V \subseteq \Li(L^p(\nu_0), L^p(\nu_1))$. 
Using the spatial tensor product for operators 
acting on $L^p$-spaces, we define the \emph{external tensor product of $(\Y, \X)$ with $(\W, \V)$} by letting 
\[
(\Y, \X) \otimes_p (\W, \V) = (\Y \otimes_p \W, \X \otimes_p \V).
\]
\end{defi}

A routine check shows that all the conditions in Definition \ref{mainD} needed to make 
$(\Y \otimes_p \W, \X \otimes_p \V)$ an $L^p$-module over $A \otimes_p B$
are met. 

\begin{tpr}\label{stdL^pMod}
Let $p \in [1, \infty)$, let $q$ be its Hölder conjugate, and let $(\Y, \X)$ be an $L^p$-module over $A\subseteq \Li(L^p(\mu_0))$
with $\X \subseteq \Li(L^p(\mu_0), L^p(\mu_1))$. 
Then, if $(\ell^q, \ell^p)$ is the C*-like module over $\C$ from Example \ref{LpLqMOD}
(here $\Omega=\Z_{\geq 1}$), we have 
\[
(\ell^q, \ell^p) \otimes_p (\Y, \X) = \bigoplus_{j = 1}^\infty( \Y, \X).
\]
\end{tpr}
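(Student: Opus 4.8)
The plan is to unwind both sides of the claimed equality as concrete subspaces of the same space of operators and show they consist of exactly the same operators. First I would identify the ambient spaces: since $(\ell^p,\ell^q)$ is realized with $\Omega=\Z_{\geq 1}$ so that $\ell^p \subseteq \Li(\ell_1^p,\ell^p)$ and $A \subseteq \Li(L^p(\mu_0))$ acts on $L^p(\mu_0) \cong \ell_1^p \otimes_p L^p(\mu_0)$, the external tensor product $\ell^p \otimes_p \X$ sits inside $\Li\big(L^p(\mu_0),\, \ell^p \otimes_p L^p(\mu_1)\big)$, and the target space $\ell^p \otimes_p L^p(\mu_1)$ is isometrically $\bigoplus_{j=1}^\infty L^p(\mu_1)$ with the usual $p$-norm (by the $L^p(\nu_0,E) \cong L^p(\nu_0)\otimes_p E$ identification from Section \ref{s3}, applied with counting measure on $\Z_{\geq 1}$). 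Under this identification an elementary tensor $\delta_j \otimes x$ with $x \in \X$ becomes the operator $\xi \mapsto (0,\dots,0,x\xi,0,\dots)$ with $x\xi$ in the $j$-th slot, which is precisely the $j$-th inclusion of the summand $\X$ into $\bigoplus_{j=1}^\infty(\X,\Y)$. So both sides are built from the ``same'' generating operators; the content is that the closures agree, i.e.\ that the operator-norm completion of $\op{span}\{\delta_j \otimes x\}$ matches the space $\X$ of Definition \ref{countablesum}.

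The key computational step is a norm formula: for a finitely supported element $(x_1,\dots,x_n,0,0,\dots)$ viewed as an operator $L^p(\mu_0) \to \bigoplus L^p(\mu_1)$, its operator norm is $\sup_{\|\xi\|_p = 1}\big(\sum_{j=1}^n \|x_j\xi\|_p^p\big)^{1/p}$, directly from the definition of the $p$-norm on the direct sum. Granting this, a sequence $(x^{(n)})$ of finitely supported elements is Cauchy in $\Li(L^p(\mu_0), \bigoplus L^p(\mu_1))$ exactly when $\sup_{\|\xi\|_p=1}\sum_{j}\|(x_j^{(n)} - x_j^{(m)})\xi\|_p^p \to 0$, and I would argue: (i) any element of $\ell^p \otimes_p \X$, being an operator-norm limit of finitely supported ones, automatically lies in $\X_{\op{w}}$ and satisfies the tail condition $\lim_{n,m}\sup_{\|\xi\|=1}\sum_{j=n}^m \|x_j\xi\|_p^p = 0$ defining $\X$ in Definition \ref{countablesum} — this is the same Cauchy-tail estimate already carried out in the proof of Theorem \ref{CDsumisLpmod}; conversely (ii) any $(x_j)_{j=1}^\infty \in \X$ is the operator-norm limit of its truncations $(x_1,\dots,x_n,0,\dots)$, again by the tail condition, and each truncation lies in $\op{span}\{\delta_j \otimes x_j\} \subseteq \ell^p \otimes_p \X$, so $(x_j)_{j=1}^\infty \in \ell^p \otimes_p \X$. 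This gives $\ell^p \otimes_p \X = \X$ as subspaces of the same operator space, and the identical argument with $q$, $\ell^q$, $\W=\Y$, and the norm on $\bigoplus_{j=1}^\infty(\X,\Y)$'s $\Y$-side (where one pairs against $(\eta_j) \in \bigoplus L^p(\mu_1)$ and the relevant norm on $\ell^q \otimes_p L^p(\mu_1) \subseteq \Li(\bigoplus L^p(\mu_1), L^p(\mu_0))$ is dual-type) yields $\ell^q \otimes_p \Y = \Y$ on the second coordinate. Hence $(\ell^p,\ell^q)\otimes_p(\X,\Y) = (\X,\Y_{\!})$-of-Definition-\ref{countablesum} $= \bigoplus_{j=1}^\infty(\X,\Y)$.

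I expect the main obstacle to be the $\Y$-coordinate rather than the $\X$-coordinate. On the $\X$-side the ambient target is a genuine $\ell^p$-direct sum and the norm formula is transparent; but on the $\Y$-side one is looking at $\ell^q \otimes_p \Y \subseteq \Li\big(\bigoplus_{j} L^p(\mu_1),\, L^p(\mu_0)\big)$, and one must check that the spatial-tensor operator norm of a finitely supported $(y_1,\dots,y_n)$ is exactly $\sup_{\sum_j\|\eta_j\|_p^p = 1}\big\|\sum_j y_j\eta_j\big\|_p$ — i.e.\ that tensoring $\Y \subseteq \Li(L^p(\mu_1),L^p(\mu_0))$ with $\ell^q \subseteq \Li(\ell^p, \ell_1^p) = \Li(\bigoplus\C, \C)$ and then composing realizes the ``row operator'' acting on $\bigoplus L^p(\mu_1)$. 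This is where property (3) of the spatial tensor product ($\|a \otimes b\| = \|a\|\|b\|$ and functoriality) together with the identification $\ell^q \otimes_p L^p(\mu_1) \cong L^p(\nu_{\Z_{\geq 1}} \times \mu_1)$ must be invoked carefully, and it is the one spot where the argument is more than bookkeeping. Once that norm identity is in hand, the closure argument is symmetric to the $\X$-side and the proof closes. A secondary subtlety worth a sentence is verifying the routine claim (asserted right after Definition \ref{ExtTP}) that $(\X\otimes_p\V, \V\otimes_p\W)$ really is an $L^p$-module, so that both sides of the asserted equality are legitimately objects of the same type; but for the present proposition this is subsumed in the concrete identifications above.
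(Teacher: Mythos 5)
Your proposal is correct and follows essentially the same route as the paper's proof: identify the ambient operator spaces, compute the operator norm of finite sums $\sum_{j=n}^m \delta_j \otimes x_j$ (resp.\ $\delta_j \otimes y_j$) as $\sup_{\|\xi\|=1}\sum_{j=n}^m\|x_j\xi\|_p^p$ (resp.\ $\sup_{\sum\|\eta_k\|^p=1}\|\sum_{k=n}^m y_k\eta_k\|$), deduce one inclusion from the density of elementary tensors plus the closedness of the direct-sum components established in Theorem \ref{CDsumisLpmod}, and the other from the tail condition making the truncations Cauchy. The point you flag about the $\Y$-side — that the spatial-tensor norm of a finitely supported row operator must be checked against the pairing with $\bigoplus_j L^p(\mu_1)$, via Hölder — is exactly the step the paper handles with the finite-dimensional Hölder estimate, so no gap remains.
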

\begin{proof}
Recall that $\bigoplus_{j = 1}^\infty( \Y, \X) = (\Zm_\Y, \Zm_\X)$
where
\[
\Zm_\X = \left\{ (x_j)_{j=1}^\infty \colon x_j \in \X, \lim_{n,m \to \infty} \sup_{\| \xi \|_p=1 } \sum_{j=n}^m \|x_j\xi \|_p^p = 0\right\} \subseteq \Li\Big(L^p(\mu_0), \bigoplus_{j=1}^\infty L^p(\mu_1)\Big),
\]
and
\[
\Zm_\Y = \left\{ (y_j)_{j=1}^\infty \colon y_j \in \Y, \lim_{n,m \to \infty} \sup_{
\sum_{j=1}^\infty\|\eta_j\|^p_p=1}\Big\| \sum_{j=n}^m y_j\eta_j \Big\| = 0\right\}
\subseteq \Li\Big(\bigoplus_{j=1}^\infty L^p(\mu_1), L^p(\mu_0)\Big).
\]
Denote by $\iota_\X$ and $\iota_\Y$ the following isometric inclusions:
\[
 \iota_\X \colon \ell^p \otimes_p \X \to \Li\Big(L^p(\mu_0), \bigoplus_{j=1}^\infty L^p(\mu_1)\Big),
 \]
  and 
  \[
  \iota_\Y \colon \ell^q \otimes_p \Y \to \Li\Big(\bigoplus_{j=1}^\infty L^p(\mu_1), L^p(\mu_0)\Big).
  \]
It suffices to show that the image of $\iota_\X$ is $\Zm_\X$
and that the image of $\iota_\Y$ is $\Zm_\Y$. For any $\zeta \in \ell^p$, any 
$x \in \X$, and any $\xi \in L^p(\mu_0)$ we have $\iota_\X(\zeta \otimes x)\xi = (\zeta(j)x\xi)_{j=1}^\infty \in 
\bigoplus_{j=1}^\infty L^p(\mu_1)$. Furthermore,
\[
\lim_{m,n \to \infty} \sup_{\| \xi\|=1} \sum_{j=n}^m \| \zeta(j)x\xi \|^p = \|x\|\lim_{m,n \to \infty} \sum_{j=n}^m | \zeta(j)|^p=0.
\]
From this it is clear that $\iota_\X(\xi \otimes x) \in \Zm_{\X}$. Since $\Zm_\X$ is closed in $\Li\big(L^p(\mu_0), \bigoplus_{j=1}^\infty L^p(\mu_1)\big)$ (see Theorem \ref{CDsumisLpmod}), 
we conclude that $\iota_\X( \ell^p \otimes_p \X) \subseteq \Zm_\X$. For the reverse inclusion, suppose that $(x_j)_{j=1}^\infty$ is in $\Zm_\X$. We claim that $\sum_{j=1}^\infty \delta_j \otimes x_j$ is an element 
of $\ell^p \otimes \X$. Indeed, for any $m \geq n \geq 1 $ we have 
\[
\Big\| \sum_{j=n}^m \delta_j \otimes x_j \Big\|^p = \sup_{\| \xi\|=1} \sum_{k=1}^\infty \int_{\Omega_1} \Big|  \sum_{j=n}^m \delta_j(k) (x_j\xi)(\omega)\Big|^p d\mu_1(\omega)  =\sup_{\| \xi\|=1}  \sum_{j=n}^m \|x_j\xi\|^p.
\]
After taking the limit as $m,n \to \infty$
and using the fact that $(x_j)_{j=1}^\infty$ is in $\Zm_\X$,
we see that $(\sum_{j=1}^n \delta_j \otimes x_j )_{n=1}^\infty$ is a Cauchy sequence in 
$\ell^p \otimes \X$, so our claim follows. It is immediate 
to check that $\iota_\X(\sum_{j=1}^\infty \delta_j \otimes x_j) = (x_j)_{j=1}^\infty$, and therefore 
we have shown that $\iota_\X( \ell^p\otimes_p \X) =\Zm_\X$ as wanted. 
Similarly, notice that for any $\upsilon \in \ell^q$, $y \in \Y$, and 
$(\eta_{j})_{j=1}^\infty \in \bigoplus_{j=1}^\infty L^p(\mu_1)$
we have
\[
\iota_\Y(\upsilon \otimes y)(\eta_{j})_{j=1}^\infty  = \sum_{j=1}^\infty \upsilon(j)y\eta_j.
\]
Hence, using the finite dimensional version of Hölder's inequality we see that 
for $m \geq n \geq 1$, 
\[
\sup_{
\sum_{j=1}^\infty\|\eta_j\|^p_p=1} \Big\| \sum_{j=n}^m \upsilon(j)y\eta_j \Big\|
\leq \| y \|  \Big( \sum_{j=n}^m |\upsilon(j)|^q\Big)^{1/q}.
\]
Thus, taking limit when $m,n \to \infty$ shows that 
$\iota_Y(\upsilon \otimes y) \in \Zm_\Y$.  Since $\Zm_\Y$ is closed, 
this is enough to show that $\iota_Y(\ell^q\otimes_p \Y) \subseteq \Zm_\Y$. 
For the reverse inclusion, once again it suffices to show that $\sum_{j=1}^\infty \delta_j \otimes y_j$ 
defines an element in $\ell^q \otimes_p \Y$ when $(y_j)_{j=1}^\infty \in \Zm_\Y$.
Let $m \geq n \geq 1 $ and notice that
\[
\Big\|\sum_{j=n}^m \delta_j \otimes y_j \Big\| =  \sup_{\sum_{k=1}^\infty\|\eta_k\|^p_p=1} 
\Big\|\sum_{k=1}^\infty\sum_{j=n}^m \delta_j(k) y_j\eta_k\Big\|
=\sup_{\sum_{k=1}^\infty\|\eta_k\|^p_p=1} 
\Big\|\sum_{k=n}^m y_k\eta_k\Big\|.
\]
Thus, letting $m,n \to \infty$ shows that $(\sum_{j=1}^n \delta_j \otimes y_j )_{n=1}^\infty$
is Cauchy in $\ell^q \otimes_p \Y$ and we are done. 
\end{proof}

Recall from classic Hilbert module theory that 
the \emph{standard Hilbert $A$-module} of 
a C*-algebra $A$ is given by $\bigoplus_{\Z_{\geq 1}} A$, 
which is the same as $\ell^2 \otimes_\C A$.
Below we use Proposition \ref{stdL^pMod} to define the the $L^p$-analogue of this module. 

\begin{defi}\label{LPSTDMOD}
Let $p \in [1,\infty)$ and let $A$ be an $L^p$-operator algebra. 
We define \textit{the standard $L^p$-module 
of $A$} as
\[
(\ell^q, \ell^p)\otimes_p (A, A) = (\ell^q \otimes_p A, \ell^p \otimes_p A) = \bigoplus_{j=1}^\infty (A,A)
\]
where $(A,A)$ is the $L^p$-module from Example \ref{A_A}.
\end{defi}

\subsection{Morphisms of $L^p$-modules}

In this section we define morphism between $L^p$-modules over a fixed $L^p$-operator algebra $A$. 
The definitions and results here are motivated by the C*-case. In particular, the main definitions are modeled after Proposition 3.10 in \cite{Delfin_2022}, where a representation 
for adjointable maps of a Hilbert module is given. 

For the rest of the section we fix measure spaces $(\Omega_j, \mathfrak{M}_j, \mu_j)$, $j=0,1,2$ and 
we often let $E_j=L^p(\mu_j)$. We also fix an $L^p$-operator algebra $A \subseteq \Li(E_0)$ for $p \in [1, \infty)$, and two $L^p$-modules, $(\Y, \X)$ and $(\W, \V)$, over $A$ with $\X \subseteq \Li(E_0, E_1)$
and $\V \subseteq \Li(E_0, E_2)$.
\begin{defi}\label{GenMorph}
We define 
the space of $L^p$-module morphisms from $(\Y, \X)$ to  $(\W, \V)$ by
\[
\Li_A((\Y, \X) \to (\W, \V)) =
\{ t \in \Li(E_1,E_2) \colon tx \in \V, wt \in \Y, \text{ for all } x \in \X, w \in \W\}.
\]
\end{defi}
The main advantage of this definition is that  $\Li_A((\Y, \X) \to (\W, \V)) $ is, by construction, 
a subspace of operators on $L^p$-spaces. Furthermore, notice that any $t \in \Li_A((\Y, \X) \to (\W, \V))$ gives rise to a pair of maps $(t^l, t^r)$, 
$t^l \colon \W \to \Y$ and $t^r \colon \X \to \V$, defined by 
\[
t^l(w) = wt, \ t^r(x) = tx.
\] 
The next lemma 
shows that the pair $(t^l, t^r)$ is in fact a `linear operator' 
from the Banach $A$-pair $(\Y,\X)$ to the Banach $A$-pair $(\W, \V)$ as defined in Section 1
of \cite{paravicini_2009} (see Remark \ref{BanPA1}).
\begin{lem}\label{BanachMORPH}
Let $t \in \Li_A((\Y, \X) \to (\W, \V))$, $x \in \X$, $y \in \Y$, $v \in \V$, $w \in \W$, and $a \in A$. 
Then 
\begin{enumerate}
\item $t^l(aw)=at^l(w)$, $t^r(xa)=t^r(x)a$, and $\lpp{t^l(w)}{x}{A}=\lpp{w}{t^r(x)}{A}$;
\item If $(\Zm,\Um)$ is another $L^p$-module over $A$ and $s \in \Li_A((\W, \V) \to (\Zm, \Um))$, then $st \in \Li_A((\Y, \X) \to (\Zm, \Um))$ and $(st)^l=t^ls^l$, $(st)^r=s^rt^t$;
\item the composition $vy \in \Li_A((\Y, \X) \to (\W, \V))$.
\end{enumerate} 
\end{lem}
\begin{proof}
The first two parts are immediate from the definitions. For the third one, it is clear that $vy \in \Li(E_1,E_2)$. Further $(vy)x = v \lpp{y}{x}{A} \in \V$ for all
$x \in \X$ and also $w(vy) = \lpp{w}{v}{A}y \in \Y$ for all $w \in \W$. 
\end{proof}
We will often denote the operator $vy$ by $\theta_{v,y} \in \Li_A((\Y, \X) \to (\W,\V))$,
which in fact satisfies 
 \begin{align*}
  \theta^{l}_{v,y}(w) &=  \lpp{w}{v}{A}y \in \Y  \  \text{for all }w \in \W, \\
   \theta^{r}_{v,y}(x) &= v \lpp{y}{x}{A} \in \V  \  \text{for all }z \in \X.
 \end{align*}
 \begin{lem}\label{K_ID}
Let $t \in \Li_A((\W,\V) \to (\Zm, \Um) )$ and $s \in \Li_A((\Zm, \Um) \to (\Y, \X))$, where $(\Zm,\Um)$ is any $L^p$-module over $A$. Then,  for any $v \in V$, $y \in \Y$, 
 \[
 t\theta_{v,y} = \theta_{t^r(v), y}, \theta_{v,y}s = \theta_{v,s^l(y)}
 \]
 \end{lem}
 \begin{proof}
 Both equalities follow from a routine calculation. 
 \end{proof}
  We now can define the compact $L^p$-module maps:
  \begin{defi}\label{GenKMorph}
We define 
the space of compact $L^p$-module morphisms from $(\Y, \X)$ to  $(\W, \V)$ by
\[
\mathcal{K}_A((\Y, \X) \to (\W, \V)) = \cj{\op{span}\{\theta_{v,y} \colon v \in \V \mbox{ and } y \in \Y \} } \subseteq \Li_A((\Y, \X) \to (\W, \V)).
\]
\end{defi}
Once again, 
$\mathcal{K}_A((\Y, \X) \to (\W, \V))$ is naturally a space of operators on $L^p$-spaces. When $(\Y, \X)=(\W,\V)$ we put $\Li_A(\Y, \X)=\Li_A((\Y, \X) \to (\Y, \X))$ and similarly
$\mathcal{K}_A(\Y, \X) =\mathcal{K}_A((\Y, \X) \to (\Y, \X))$. 
Then, Definitions \ref{GenMorph} and \ref{GenKMorph}
collapse to 
\begin{equation}\label{L_A(X,Y)}
\mathcal{L}_A(\Y, \X) = \{ t \in \Li(L^p(\mu_1)) \colon tx \in \X \mbox{ and } yt \in \Y \mbox{ for all } x \in \X, y \in \Y \}.
\end{equation}
By definition, $\Li_A(\Y, \X)$ is already an $L^p$-operator subalgebra of $\Li(L^p(\mu_1))$. Similarly, 
 \begin{equation}\label{KK_A(X,Y)}
 \mathcal{K}_A(\Y, \X) = \cj{\op{span}\{\theta_{x,y} \colon x \in \X \mbox{ and } y \in \Y \} } \subseteq \Li(L^p(\mu_1)).
 \end{equation}
By definition $\mathcal{K}_A(\Y, \X)  \subseteq \mathcal{L}_A(\Y, \X)$.
\begin{tpr}\label{K<L}
$\mathcal{K}_A(\Y, \X)$ is a closed two-sided ideal in $\mathcal{L}_A(\Y, \X)$. 
\end{tpr}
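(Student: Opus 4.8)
The plan is to treat the two requirements --- being closed and being a two-sided ideal --- separately. Closedness is automatic: $\mathcal{K}_A((\X,\Y))$ is, by its very definition above, the closed linear span of the operators $\theta_{x,y}$. It is also already noted that $\mathcal{K}_A((\X,\Y))$ is a subalgebra of $\mathcal{L}_A((\X,\Y))$. So the only thing left is to show that $\mathcal{K}_A((\X,\Y))$ absorbs left and right multiplication by elements of $\mathcal{L}_A((\X,\Y))$, and I would prove this first on the generators $\theta_{x,y}$ and then extend by linearity and continuity.

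First I would fix $t \in \mathcal{L}_A((\X,\Y))$, $x \in \X$, and $y \in \Y$, and compute $t\theta_{x,y}$ and $\theta_{x,y}t$ directly, viewing $\theta_{x,y} = xy$ as the operator on $L^p(\mu_1)$ obtained by first applying $y \colon L^p(\mu_1) \to L^p(\mu_0)$ and then $x \colon L^p(\mu_0) \to L^p(\mu_1)$. By associativity of composition of bounded maps between $L^p$-spaces, $t\theta_{x,y} = t(xy) = (tx)y$; since $t \in \mathcal{L}_A((\X,\Y))$ gives $tx \in \X$, this equals $\theta_{tx,y} \in \mathcal{K}_A((\X,\Y))$. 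Symmetrically, $\theta_{x,y}t = (xy)t = x(yt)$, and $yt \in \Y$ again because $t \in \mathcal{L}_A((\X,\Y))$, so this equals $\theta_{x,yt} \in \mathcal{K}_A((\X,\Y))$.

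Since $\mathcal{K}_A((\X,\Y))$ is a linear subspace, the previous step already gives $tz, zt \in \mathcal{K}_A((\X,\Y))$ for every $z \in \op{span}\{\theta_{x,y} \colon x \in \X, \ y \in \Y\}$. For a general $k \in \mathcal{K}_A((\X,\Y))$, I would pick $z_n$ in that span with $z_n \to k$; since left and right multiplication by the bounded operator $t$ are continuous on $\Li(L^p(\mu_1))$, we get $tz_n \to tk$ and $z_n t \to kt$, and closedness of $\mathcal{K}_A((\X,\Y))$ forces $tk, kt \in \mathcal{K}_A((\X,\Y))$. Together with the already-recorded inclusion $\mathcal{K}_A((\X,\Y)) \subseteq \mathcal{L}_A((\X,\Y))$, this completes the proof. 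There is no genuine obstacle here; the only thing to be careful about is keeping track of which space each operator maps between when invoking associativity, so that each rewritten composition is again of the form $\theta_{x',y'}$ with $x' \in \X$ and $y' \in \Y$.
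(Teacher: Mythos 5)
Your proposal is correct and follows essentially the same route as the paper: the paper's proof also verifies $t\theta_{x,y}=\theta_{tx,y}$ and $\theta_{x,y}t=\theta_{x,yt}$ using $tx\in\X$ and $yt\in\Y$, with the extension by linearity, continuity, and closedness left implicit. You have merely spelled out those routine final steps.
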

\begin{proof}
By construction, $\mathcal{K}_A(\Y, \X)$  is a closed subset of $\mathcal{L}_A(\Y, \X)$. 
The ideal claim follows at once from Lemma \ref{K_ID}.
\end{proof}

Below we will compute $\mathcal{L}_A(\Y, \X)$ and $\mathcal{K}_A(\Y, \X)$ 
for some of our known examples.

\begin{ex}\label{A_Amorph}
Let $A$ be an $L^p$-operator algebra with a c.a.i. and let
$(A, A)$ be the $L^p$-module over $A$ from Example \ref{A_A}. 
Then
\[
\mathcal{K}_A(A,A) \cong A. 
\]
Indeed, the Cohen-Hewitt factorization theorem 
(in fact, we only need Theorem 1 in \cite{Cohen1959}) 
implies at once that the map $\theta_{a,b} \mapsto ab$ induces 
an isometric isomorphism from $\mathcal{K}_A(A,A)$ to $A$. 
Next, assuming in addition that $A$ sits nondegenerately 
in $\Li(L^p(\mu))$ (i.e., $AL^p(\mu)$ is a dense subset of $L^p(\mu)$), 
we get 
\[
\Li_A(A,A)  \cong M(A),
\]
where $M(A)$ is the multiplier algebra of $A$ defined as double centralizers. Indeed, Equation \eqref{L_A(X,Y)}  gives
\[
\Li_A(A,A) = \{ t \in \Li(L^p(\mu)) \colon ta \in A, at \in A \mbox{ for all } a \in A\},
\]
which coincides with $M(A)$ thanks to Corollary 3.5 in \cite{BDW2024}. 
\end{ex}

\begin{ex}\label{LpLqMODmorph}
Let $(\Omega, \mathfrak{M}, \mu)$ be a measure space, let $p \in (1, \infty)$, and let $q$ be its Hölder conjugate. We can also include $p=1$ whenever $\mu$ is semi-finite. 
Consider $(L^q(\mu), L^p(\mu))$, the C*-like $L^p$-module over $\C=\Li(\ell_1^p)$
presented in Example \ref{LpLqMOD}. 
Then we claim 
\[
\Li_\C(L^q(\mu), L^p(\mu)) = \Li(L^p(\mu)).
\] 
According to Equation \eqref{L_A(X,Y)}, 
$\Li_\C(L^q(\mu), L^p(\mu))$ is given by 
\[
\{ t \in \Li(L^p(\mu)) \colon t\xi \in L^p(\mu)\text{ for all } \xi \in L^p(\mu) \ \mbox{ and } \  \eta t \in L^q(\mu) \text{ for all } \eta \in L^q(\mu)\},
\]
and therefore $\Li_\C(L^q(\mu), L^p(\mu)) \subseteq  \Li(L^p(\mu))$. We only need to establish the reverse inclusion. Take any $t \in \Li(L^p(\mu))$ and let $t' \in \Li(L^q(\mu))$ be its Banach dual map.
The composition $t\xi$ agrees with $t(\xi)\in L^p(\mu)$ for any $\xi \in L^p(\mu)$ and a direct check shows that $\eta t$ agrees with  $t'(\eta) \in L^q(\mu)$ for any $\eta \in L^q(\mu)$, proving the desired reverse inclusion.
We now claim that
\[
\mathcal{K}_\C(L^q(\mu), L^p(\mu)) = \mathcal{K}(L^p(\mu)).
\]
Indeed, since $L^p(\mu)$ has 
the the approximation property (see Example 4.5 in \cite{Ryan2002}), then
$\mathcal{K}(L^p(\mu))$ is the closure of the finite rank operators. Any 
rank one operator on $L^p(\mu)$ is given by a pair $(\xi, \eta) \in  L^p(\mu)\times L^q(\mu)$
via $\xi_0 \mapsto \xi \langle \eta, \xi_0 \rangle = \theta_{\xi, \eta}\xi_0$. Thus, 
\[
\mathcal{K}_\C(L^q(\mu), L^p(\mu)) = \cj{\op{span}\{ \theta_{\xi ,\eta} \colon \xi \in L^p(\mu), \eta \in L^q(\mu)\}} = \mathcal{K}(L^p(\mu)),
\]
as claimed. 
\end{ex}

The symmetry between Example \ref{LpLqMOD} and Example \ref{LqLpMod}
is actually a particular case of the following result. 

\begin{tpr}
Let $p \in [1, \infty)$, let $A \subseteq \Li(L^p(\mu_0))$ be 
an $L^p$-operator algebra, 
and let $(\Y, \X)$ be an $L^p$-module over $A$ with 
$\X \subseteq \Li( L^p(\mu_0), L^p(\mu_1))$. 
Then $(\X, \Y)$ is an $L^p$-module over $\mathcal{K}_A(\Y, \X) \subseteq \Li(L^p(\mu_1))$. 
\end{tpr}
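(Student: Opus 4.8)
The plan is to verify, one by one, the three defining conditions of Definition \ref{mainD} for the pair $(\Y, \X)$, now regarded over the algebra $B := \mathcal{K}_A((\X, \Y)) \subseteq \Li(L^p(\mu_1))$; recall $B$ is itself an $L^p$-operator algebra, as noted just after equation (\ref{KK_A(X,Y)}). First I would fix the bookkeeping: since $B$ acts on $L^p(\mu_1)$, the roles of $\mu_0$ and $\mu_1$ are interchanged relative to $(\X, \Y)$, so that $\Y \subseteq \Li(L^p(\mu_1), L^p(\mu_0))$ is the ``left'' component of the new module and $\X \subseteq \Li(L^p(\mu_0), L^p(\mu_1))$ is the ``right'' component; both are already known to be closed subspaces.

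Condition (2) of Definition \ref{mainD} for $(\Y, \X)$ requires $xy \in B$ for all $x \in \X$ and $y \in \Y$, and this is immediate: $xy = \theta_{x,y}$, and by the very definition of $\mathcal{K}_A((\X, \Y))$ in equation (\ref{KK_A(X,Y)}) every such $\theta_{x,y}$ lies in $B$.

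For conditions (1) and (3) it suffices to treat $b = \theta_{x',y'} = x'y'$ with $x' \in \X$, $y' \in \Y$, and then extend to finite linear combinations and to the closed span, using that $\X$ and $\Y$ are closed subspaces and that composition in $\Li(L^p(\mu_1))$ is jointly continuous. For condition (3), $bx = x'y'x = x'\,(y'x) = x'\,\lpp{y'}{x}{A}$, where $y'x \in A$ by condition (2) for $(\X, \Y)$, and hence $x'\lpp{y'}{x}{A} \in \X$ by condition (1) for $(\X, \Y)$. For condition (1), $yb = yx'y' = (yx')\,y' = \lpp{y}{x'}{A}\,y'$, where $yx' \in A$ by condition (2) for $(\X, \Y)$, and hence $\lpp{y}{x'}{A}\,y' \in \Y$ by condition (3) for $(\X, \Y)$. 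Passing to limits then yields $bx \in \X$ and $yb \in \Y$ for every $b \in B$.

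There is no genuine obstacle here: the proof is a direct unwinding of Definition \ref{mainD}, and the only point requiring care is keeping track of which factor is multiplied on which side once the two components have been swapped. Conceptually, condition (2) for $(\X, \Y)$ is exactly the ingredient that makes each of the two remaining conditions collapse onto another of the original three.
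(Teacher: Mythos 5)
Your proof is correct and is essentially the argument the paper gives: the paper simply cites the containment $\mathcal{K}_A((\X,\Y)) \subseteq \Li_A((\X,\Y))$ (so that $tx \in \X$ and $yt \in \Y$ for every $t$ in the ideal, giving conditions (1) and (3) at once), while you re-derive that containment on the generators $\theta_{x',y'}$ and pass to linear combinations and limits. Both routes rest on the same computation $\theta_{x',y'}x = x'\lpp{y'}{x}{A}$ and $y\,\theta_{x',y'} = \lpp{y}{x'}{A}y'$, and your identification of condition (2) with $xy = \theta_{x,y} \in \mathcal{K}_A((\X,\Y))$ matches the paper exactly.
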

\begin{proof}
We only need to verify conditions \eqref{lpm5}-\eqref{lpm5.2} in Definition \ref{mainD}.
For any $t \in \mathcal{K}_A(\Y, \X) $ we have $yt \in \Y$ for 
any $y \in \Y$, and $tx \in \X$ for any $x \in \X$.  
This proves both condition \eqref{lpm5} and \eqref{lpm5.1}. 
Finally, since $xy = \theta_{x,y} \in  \mathcal{K}_A(\Y, \X) $, condition \eqref{lpm5.2} holds and we are done. 
\end{proof}

Next we compute the morphisms 
for the standard $L^p$-module of 
an $L^p$-operator algebra, as given in Definition \ref{LPSTDMOD}. 
Two well known facts in the C*-case are $\mathcal{K}_A(\ell^2 \otimes_\C A) = \mathcal{K}(\ell^2) \otimes A$, 
and $\mathcal{L}_A(\ell^2 \otimes_\C A) =M( \mathcal{K}(\ell^2) \otimes A)$.  Our next result shows that, 
provided that we start with reasonably well behaved $L^p$-operator algebra $A$, the same results hold 
for the standard $L^p$-module $(\ell^q, \ell^p)\otimes_p (A, A)$. 

\begin{tpr}\label{Kasp}
Let $p \in [1, \infty)$, let $A \subseteq \Li(L^p(\mu))$ be an $L^p$-operator algebra, and let $\nu$ be counting 
measure on $\Z_{\geq 1}$. If $A$ has a c.a.i., then
\[
\mathcal{K}_A (\ell^q \otimes_p A,\ell^p \otimes_p A) =\mathcal{K}(\ell^p) \otimes_p A \subseteq L^p(\nu \times \mu). 
\]
If in addition $A$ sits nondegenerately in $\Li(L^p(\mu))$, then 
\[
\mathcal{L}_A (\ell^q \otimes_p A,\ell^p \otimes_p A) =M(\mathcal{K}(\ell^p) \otimes_p A)\subseteq L^p(\nu \times \mu). 
\]
\end{tpr}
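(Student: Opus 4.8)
The plan is to establish the two isometric identifications by working inside $\Li(L^p(\nu\times\mu))$, where $\ell^p\otimes_p A$ and $\ell^q\otimes_p A$ have already been realized as the pair $(\Zm_\X,\Zm_\Y)$ via Proposition \ref{stdL^pMod} and Definition \ref{countablesum}. First I would handle the $\mathcal{K}_A$ statement. By definition $\mathcal{K}_A\big((\ell^p\otimes_p A,\ell^q\otimes_p A)\big)$ is the closed span of the rank-one operators $\theta_{x,y}$ for $x\in\ell^p\otimes_p A$, $y\in\ell^q\otimes_p A$. Using density of elementary tensors, it suffices to compute $\theta_{\zeta\otimes a,\ \upsilon\otimes b}$ for $\zeta\in\ell^p$, $\upsilon\in\ell^q$, $a,b\in A$; a direct computation with the pairing $(\upsilon\otimes b\mid \zeta'\otimes a')_A=\langle\upsilon,\zeta'\rangle\,ba'$ shows this operator equals $\theta^{\ell^p}_{\zeta,\upsilon}\otimes (ab)$, where $\theta^{\ell^p}_{\zeta,\upsilon}$ is the corresponding rank-one operator on $\ell^p$. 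Since $A$ has a c.a.i., the Cohen–Hewitt factorization theorem (as in Example \ref{A_Amorph}) gives $\cj{\op{span}}\{ab:a,b\in A\}=A$, and $\cj{\op{span}}\{\theta^{\ell^p}_{\zeta,\upsilon}\}=\mathcal{K}(\ell^p)$ because $\ell^p$ has the approximation property (as in Example \ref{LpLqMODmorph}). Combining these, the closed span of the $\theta_{x,y}$ is exactly $\cj{\op{span}}\{k\otimes a: k\in\mathcal{K}(\ell^p),\,a\in A\}=\mathcal{K}(\ell^p)\otimes_p A$, and the norm agreement is automatic since everything is computed as an operator norm on $L^p(\nu\times\mu)$.

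For the second statement I would first record, using Proposition \ref{K<L}, that $\mathcal{K}_A\big((\ell^p\otimes_p A,\ell^q\otimes_p A)\big)=\mathcal{K}(\ell^p)\otimes_p A$ is a closed two-sided ideal in $\mathcal{L}_A\big((\ell^p\otimes_p A,\ell^q\otimes_p A)\big)$. The key structural fact to verify is that $\mathcal{K}(\ell^p)\otimes_p A$ sits nondegenerately in $\Li(L^p(\nu\times\mu))$: since $A$ acts nondegenerately on $L^p(\mu)$ and $\mathcal{K}(\ell^p)$ acts nondegenerately on $\ell^p$, the tensor product $\mathcal{K}(\ell^p)\otimes_p A$ acts nondegenerately on $\ell^p\otimes_p L^p(\mu)=L^p(\nu\times\mu)$ (one approximates elementary tensors $\zeta\otimes\xi$ by $k\zeta\otimes a\xi$). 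Moreover $\mathcal{K}(\ell^p)\otimes_p A$ has a c.a.i.\ given by $(f_n\otimes e_\lambda)$ with $f_n$ the finite-rank coordinate projections on $\ell^p$ and $(e_\lambda)$ the c.a.i.\ of $A$. Now Corollary \ref{M(A)=dm}, applied to the Banach algebra $\mathcal{K}(\ell^p)\otimes_p A$ nondegenerately represented on $L^p(\nu\times\mu)$, identifies $M(\mathcal{K}(\ell^p)\otimes_p A)$ isometrically with the two-sided multiplier algebra $\{t\in\Li(L^p(\nu\times\mu)): t\,(\mathcal{K}(\ell^p)\otimes_p A)\subseteq \mathcal{K}(\ell^p)\otimes_p A,\ (\mathcal{K}(\ell^p)\otimes_p A)\,t\subseteq \mathcal{K}(\ell^p)\otimes_p A\}$. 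It remains to show this two-sided multiplier algebra coincides, as a subalgebra of $\Li(L^p(\nu\times\mu))$, with $\mathcal{L}_A\big((\ell^p\otimes_p A,\ell^q\otimes_p A)\big)=\{t\in\Li(\bigoplus_j L^p(\mu)): t\,\Zm_\X\subseteq\Zm_\X,\ \Zm_\Y\, t\subseteq\Zm_\Y\}$.

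The main obstacle is precisely this last identification: translating the condition "$t$ preserves the module pair $(\Zm_\X,\Zm_\Y)$'' into the condition "$t$ two-sidedly multiplies the ideal $\mathcal{K}(\ell^p)\otimes_p A$''. One inclusion is easier: if $t$ multiplies the compacts, then for $x\in\Zm_\X$ and any $y\in\ell^q\otimes_p A$ one has $tx\cdot y$ related to $t\,\theta_{x,y}\in\mathcal{K}(\ell^p)\otimes_p A$, and using factorization in $\ell^p\otimes_p A$ (which holds since it is itself a nondegenerate module admitting suitable approximate factorizations) one recovers $tx\in\ell^p\otimes_p A=\Zm_\X$; symmetrically $\Zm_\Y\, t\subseteq\Zm_\Y$. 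Conversely, if $t\,\Zm_\X\subseteq\Zm_\X$ and $\Zm_\Y\,t\subseteq\Zm_\Y$, then for any $\theta_{x,y}$ with $x\in\Zm_\X$, $y\in\Zm_\Y$ one has $t\,\theta_{x,y}=\theta_{tx,y}\in\mathcal{K}_A=\mathcal{K}(\ell^p)\otimes_p A$ and $\theta_{x,y}\,t=\theta_{x,yt}\in\mathcal{K}(\ell^p)\otimes_p A$, and passing to closed spans gives that $t$ two-sidedly multiplies $\mathcal{K}(\ell^p)\otimes_p A$. Thus the two subalgebras of $\Li(L^p(\nu\times\mu))$ agree, and since both identifications (with $M(\mathcal{K}(\ell^p)\otimes_p A)$ via Corollary \ref{M(A)=dm}, and with $\mathcal{L}_A$ via its defining operator norm) are isometric, the conclusion follows. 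I expect the delicate points to be the careful use of Cohen–Hewitt factorization to pass between "$tx\cdot y\in A$-combinations lie in the ideal'' and "$tx\in\ell^p\otimes_p A$'', and checking the nondegeneracy hypothesis of Corollary \ref{M(A)=dm} is genuinely met for the tensor product algebra.
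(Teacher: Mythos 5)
Your proposal is correct and follows essentially the same route as the paper: the same computation $\theta_{\zeta\otimes a,\,\upsilon\otimes b}=\theta_{\zeta,\upsilon}\otimes ab$ combined with Cohen--Hewitt factorization for the identification of the compacts, and, for the multiplier statement, the same application of Corollary \ref{M(A)=dm} after checking that $\mathcal{K}(\ell^p)\otimes_p A$ acts nondegenerately on $L^p(\nu\times\mu)$, with the two inclusions argued exactly as you sketch (the easy one via $t\theta_{x,y}=\theta_{tx,y}$ and $\theta_{x,y}t=\theta_{x,yt}$). The only difference is one of explicitness: where you appeal to ``suitable approximate factorizations'' in $\ell^p\otimes_p A$ to conclude $t(x\otimes a)\in\ell^p\otimes_p A$, the paper writes $a=a_0a_1$ and factors $x\otimes a=(\theta_{x,\delta_1}\otimes a_0)(\delta_1\otimes a_1)$, so that $t(x\otimes a)=\bigl[t(\theta_{x,\delta_1}\otimes a_0)\bigr](\delta_1\otimes a_1)$ visibly lands in the module.
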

\begin{proof}
The first claim follows at once from the Cohen-Hewitt factorization Theorem and the fact that 
$\theta_{x \otimes a , y \otimes b}= \theta_{x,y} \otimes ab$ for any $x \in \ell^p$, $y \in \ell^q$, and $a,b \in A$. 
For the second one, since $\mathcal{K}(\ell^p)$ sits nondegenerately in $\Li(\ell^p)$, we have that $\mathcal{K}(\ell^p) \otimes_p A$ 
is nondegenerately represented on $L^p(\nu \times \mu)$ by construction. Further, both $\mathcal{K}(\ell^p) $ and $A$ have c.a.i.'s, and therefore so does $\mathcal{K}(\ell^p) \otimes_p A$. Hence,  Corollary 3.5 in \cite{BDW2024}
gives
\[
M(\mathcal{K}(\ell^p) \otimes_p A) = \{ t \in L^p(\nu \times \mu) \colon tc, ct \in \mathcal{K}(\ell^p) \otimes_p A \text{ for all } c \in  \mathcal{K}(\ell^p) \otimes_p A\}.
\]
On the other hand, by definition, $\mathcal{L}_A (\ell^q \otimes_p A,\ell^p \otimes_p A) $ 
is equal to
\[
 \{ t \in L^p(\nu \times \mu) \colon t(x \otimes a) \in \ell^p \otimes_p A, (y \otimes a)t \in \ell^q \otimes_p A  \text{ for all } x \in \ell^p, y \in \ell^q, a \in A\}.
\]
We will show that both algebras are equal. First notice that, by the first claim, 
\[
\mathcal{K}(\ell^p) \otimes_p A = \cj{\op{span}\{ \theta_{x \otimes a, y \otimes b} \colon x \in \ell^p, y \in \ell^q, a,b \in A\}}.
\] 
Thus, if $t \in \mathcal{L}_A\big( (\ell^q \otimes_p A,\ell^p \otimes_p A) \big)$, 
then $t \theta_{x \otimes a,y \otimes b}=\theta_{t(x\otimes a),y \otimes b} \in \mathcal{K}(\ell^p) \otimes_p A $ and $\theta_{x \otimes b, y \otimes b}t=\theta_{x \otimes b,(y \otimes b)t} \in \mathcal{K}(\ell^p) \otimes_p A $. 
This proves  
\[
\mathcal{L}_A(\ell^q \otimes_p A,\ell^p \otimes_p A) \subseteq M(\mathcal{K}(\ell^p) \otimes_p A).
\]
For the reverse inclusion, let $t \in M(\mathcal{K}(\ell^p) \otimes_p A)$, 
take any $x \otimes a \in \ell^p \otimes_p A$ and use the Cohen-Hewitt factorization Theorem to write $a=a_0a_1$. Then 
\[
t(\theta_{x, \delta_1} \otimes a_0) \in \mathcal{K}(\ell^p) \otimes_p A=\mathcal{K}_A (\ell^q \otimes_p A,\ell^p \otimes_p A),
\]
which implies that 
$t(x \otimes a)=t(\theta_{x, \delta_1} \otimes a_0)(\delta_1 \otimes a_1) \in \ell^p \otimes_p A$. 
A symmetric argument shows that $(y\otimes a)t \in \ell^q \otimes_p A$
for any $y \otimes a\in \ell^q \otimes_p A$, so it follows that $t \in  \mathcal{L}_A (\ell^q \otimes_p A,\ell^p \otimes_p A) $, 
finishing the proof.
\end{proof}
\begin{cor}
Let $p \in [1, \infty)$, let $q$ be its Hölder conjugate, 
and let $A$ be an $L^p$-operator algebra with a bicontractive 
approximate identity. The quotient algebra  $\mathcal{L}_A (\ell^q \otimes_p A,\ell^p \otimes_p A)  / \mathcal{K}_A (\ell^q \otimes_p A,\ell^p \otimes_p A)$ is also an $L^p$-operator algebra. 
\end{cor}
\begin{proof}
The natural c.a.i.\ of $\mathcal{K}_A(\ell^p)$ is in fact bicontractive. Thus, in this case, Proposition \ref{Kasp}  implies that $\mathcal{K}_A(\ell^q \otimes_p A,\ell^p \otimes_p A)=\mathcal{K}(\ell^p) \otimes_p A$ also has a bicontractive approximate identity obtained simply by tensoring the one for $\mathcal{K}(\ell^p)$ with the one for $A$. The result now follows at once from Part (1) of Lemma 4.5 in  \cite{blph2020} . 
\end{proof}

\begin{quest}
Proposition \ref{Kasp} is a $p$-version of a particular instance of Kasparov's theorem  (Theorem 15.2.12 in \cite{wegge-olsen_2004}). A natural question to ask is whether Kasparov's theorem  holds for $L^p$-modules $(\Y,\X)$ over $A$, that is, do we have
\[
\Li_A (\Y, \X)  = M\big( \mathcal{K}_A(\Y, \X) \big)?
\]
\end{quest}

\section{$L^p$-correspondences}\label{s5}

In this section we define the extra structure needed on $L^p$-modules 
to obtain $L^p$-correspondences. We then present an interior tensor product 
construction for these correspondences.

\subsection{$L^p$-correspondences }

Having defined $L^p$-module morphisms in the previous section,
we are now ready to give a definition for correspondences over $L^p$-operator algebras. 

 \begin{defi}\label{mainLpcorrresD}
Let $(\Omega_0, \mathfrak{M}_0, \mu_0), (\Omega_1, \mathfrak{M}_1, \mu_1)$ be measure spaces, let $p \in [1, \infty)$,
let $A$ be an $L^p$-operator algebra, and let $B \subseteq \Li(L^p( \mu_0))$
 be a concrete $L^p$-operator algebra. An \emph{$(A,B)$ $L^p$-correspondence} is a pair $((\Y, \X), \varphi)$
 where $(\Y, \X)$ is an $L^p$-module over $B$ with $\X \subseteq \Li(L^p(\mu_0), L^p(\mu_1))$ and $ \varphi \colon A \to \Li_{B}(\X, \Y)$
 is a contractive homomorphism. When $A=B$ we say that $((\Y, \X), \varphi)$ is an $L^p$-correspondence over $A$.
 \end{defi}
 
We now look back at our examples of $L^p$-modules and make them into 
$L^p$-correspondences. 

\begin{ex}\label{A_ALp}
Let $(A, A)$ be the $L^p$-module from Example \ref{A_A}. 
Let $\varphi_A$ be a contractive automorphism of $A$. Notice that for any 
$a, b \in A$, $\varphi_A(a)b \in A$ and $b\varphi_A(a) \in A$. Therefore,
$\varphi_A(a) \in \Li_A(A,A)$ for all $a \in A$. 
Thus, $((A,A), \varphi_A)$ can be regarded as an $L^p$-correspondence over $A$.
\end{ex}

\begin{ex}\label{LpLqLP}
Let $(\Omega, \mathfrak{M}, \mu)$ be a measure 
space, let $p \in (1, \infty)$ with Hölder conjugate $q$, 
and let $(L^q(\mu), L^p(\mu))$
be the C*-like $L^p$-module over $\C$ from Example \ref{LpLqMOD}.
For each $z \in \C$, we define $\varphi_\C(z) \colon L^p(\mu) \to L^p(\mu)$
by $\varphi_\C(z) = z \cdot \op{id}_{L^p(\mu)}$. 
Then it is clear that $\varphi_\C(z)\xi \in \Li(\ell_1^p,L^p(\mu))$ and $\eta\varphi_\C(z) \in \Li(L^p(\mu), \ell_1^p)$ for all 
$z \in \C$, $\xi \in \Li(\ell_1^p,L^p(\mu))$, and $\eta \in \Li(L^p(\mu), \ell_1^p)$. Hence, $\varphi_\C(z) \in \Li_\C(L^q(\mu), L^p(\mu))$. Finally, 
since $\|\varphi_\C(z)\|=|z|$,
it follows that $((L^q(\mu), L^p(\mu)), \varphi_\C)$ is an $L^p$-correspondence over $\C$.
\end{ex}

\begin{ex}\label{lplqLp}
Let $p \in (1, \infty)$, let 
let $p \in (1, \infty)$ with Hölder conjugate $q$,
and $(\ell^q_d, \ell^p_d)$ be the C*-like $L^p$-module from Example \ref{lplqMOD}. For each 
$z \in \C$ let $\varphi_d(z): \ell_d^p \to \ell_d^p$ be given by 
\[
\varphi_d(z)(\zeta(1), \ldots, \zeta(d)) = (z\zeta(1), \ldots, z\zeta(d))
\] 
Then this is a particular example of Example \ref{LpLqLP}, so
it follows that $((\ell_d^q,\ell_d^p), \varphi_d)$ is an $L^p$-correspondence over $\C$.
\end{ex}

\begin{ex}
Let $(\ell_d^q \otimes_p A, \ell_d^p \otimes_p A)$ be the $L^p$-module from Example \ref{AdAdMOD}. 
For each 
$a \in A$ let $\varphi(a): L^p(\Omega, \mu)^d \to  L^p(\Omega, \mu)^d $ be given by 
\[
\varphi(a)(\xi_1, \ldots, \xi_d) = (a\xi_1, \ldots, a\xi_d)
\] 
Then it is clear that $\varphi(a)x \in \ell_d^p \otimes_p A$ and $y\varphi(a) \in \ell_d^q \otimes_p A$ for all 
$x \in \ell_d^p \otimes_p A$ and $y \in \ell_d^q \otimes_p A$. 
Since $\|\varphi(a)\| \leq \|a\|$,
it follows that $((\ell_d^q \otimes_p A, \ell_d^p \otimes_p A), \varphi)$ is an $L^p$-correspondence over $A$.
\end{ex}

\subsection{Tensor Product of $L^p$-correspondences}
 
Before giving our main definition for the interior tensor product of $L^p$-correspondences, we briefly recall
the setting for the C*-case guaranteed by Theorem 4.4 in \cite{Delfin_2022}.
If $(\X, \varphi_\X)$ is an $(A, B)$ C*-correspondence represented by 
$(\pi_A, \pi_B, \pi_\X)$ on $(\Hi_1, \Hi_2)$, and
$(\Y, \varphi_\Y)$ is a $(B, C)$ C*-correspondence 
represented by $(\pi_B, \pi_C, \pi_\Y)$ on $(\Hi_0, \Hi_1)$. 
Then given some nondegeneracy conditions, $(\X\otimes_{\varphi_\Y}\Y, \widetilde{\varphi_\X})$
can be represented on $(\Hi_0,\Hi_2)$ via the map defined by $x \otimes y \mapsto \pi_X(x)\pi_\Y(y)$. 
Furthermore, in this scenario, if $\kappa_C$ is the isomorphism from 
$\Li_{\pi_C(Y)}(\pi_\Y(\Y))$ 
to  $\Li_{C}(\Y)$ given by Part (ii) in Proposition 3.10 in \cite{Delfin_2022}, then 
it is not hard to check that
\[
\varphi_\Y(\langle x_1, x_2 \rangle_B) = \kappa_C(\pi_X(x_1)^*\pi_X(x_2)).
\]
This essentially means that, at the concrete level, 
the left action $\varphi_\Y$ acts as the identity on $\langle \X, \X \rangle_B$. 
Translating all this to the $L^p$-case gives rise to the following definition.

\begin{defi}\label{TensorLpCorres}
Let $p \in [1, \infty)$ and for each $j=0,1,2$ let $(\Omega_j, \mathfrak{M}_j, \mu_j)$ be a measure space.
Set $E_j=L^p(\mu_j)$ for $j=0,1,2$ and 
let $A$ be an $L^p$-operator algebra, and let
$B \subseteq \Li(E_1)$ and
 $C \subseteq \Li(E_0)$ be concrete $L^p$-operator algebras. 
Suppose $((\Y, \X), \varphi)$ is an $(A,B)$ $L^p$-correspondence with $\X \subseteq \Li(E_1,E_2)$ and $\Y \subseteq  \Li(E_2,E_1)$. 
Suppose also that $((\W, \V), \rho)$ is a $(B,C)$ $L^p$-correspondence with 
$\V \subseteq \Li(E_0,E_1)$, $\W \subseteq \Li(E_1,E_0)$, 
and such that $\rho(\lpp{y}{x}{B})=yx$ for all $x \in \X$ and $y \in \Y$.
Then we define an $(A,C)$-$L^p$-correspondence 
\[
((\Y, \X), \varphi) \otimes_\rho ((\W, \V), \rho) = ((\cj{\W \Y} , \cj{\X\V} ), \widetilde{\varphi})
\]
where $\widetilde{\varphi}: A \to \Li_C(\cj{\W \Y} , \cj{\X\V})$ is determined by
\[
\widetilde{\varphi}(a)\xi = \varphi(a)\xi,
\]
for any $\xi \in E_2$.
\end{defi}

We end the paper by checking that the objects defined in Definition \ref{TensorLpCorres} form indeed an $(A,C)$ $L^p$-correspondence. 
We first check that $(\cj{\W \Y} , \cj{\X\V} )$ is indeed an $L^p$-module 
over $C$. By Definition $\cj{\X\V}$ and $\cj{\W \Y}$ are closed subspaces of 
bounded operators of $\Li(E_2 , E_0)$ and $\Li(E_0 , E_2)$. We now check all the 
conditions in Definition \ref{mainD}. 
Let $x \in \X$, $v \in \V$ and $c \in C$. Then we know that $vc \in \V$ 
and therefore $x(vc) \in \X\V$. This is enough to see that $\cj{\X\V} C \subseteq \cj{\X\V}$, 
giving Condition \eqref{lpm5}. Similarly, if $c \in C$, $y \in \Y$ and $w \in \W$ 
we get $c(wy)=(cw)y \in \W\Y$, from where Condition \eqref{lpm5.1} follows. For Condition \eqref{lpm5.2}, take $x \in \X$, $v \in \V$, $y \in \Y$ and $w \in \W$. 
Then since $yx \in B$ satisfies $\rho(\lpp{y}{x}{B})=yx$, it follows that 
\[
\lpp{wy}{xv}{C} = (wy)(xv)=w\rho(\lpp{y}{x}{B})v \in \W\V \subseteq C,
\]
because $\rho(b)v \in \V$ for any $b \in B$. It remains to check
that $\widetilde{\varphi}(a) \in \Li_C(\cj{\W\Y}, \cj{\X\V}))$ for any $a \in A$. Indeed, 
it is clear that for any $x \in \X$ and $v \in \V$
\[
\widetilde{\varphi}(a)xv=(\varphi(a)x)v \in \X\V,
\]
and also that for each $y \in \Y$ and $w \in \W$
\[
(wy)\widetilde{\varphi}(a)=w(y\varphi(a)) \in \W\Y.
\]
Finally, since $\| \widetilde{\varphi}(a)\| = \| \varphi(a)\|$, it now follows that 
 $\widetilde{\varphi}(a) \in \Li_C(\X \otimes_B \V, \Y \otimes_B\W)$. 
 Therefore, the ingredients in Definition \ref{TensorLpCorres} do give rise to 
an  $(A,C)$ $L^p$-correspondence. 

\bibliographystyle{plain}
\bibliography{Lp-mod.bib}
\end{document}